\newif\ifdebug                                                      %
\numberwithin{equation}{section}
\newtheorem{theorem}{Theorem}[section]
\newtheorem{proposition}[theorem]{Proposition}
\newtheorem{corollary}[theorem]{Corollary}
\theoremstyle{remark}
\newtheorem{example}[theorem]{Example}
\newtheorem{definition}[theorem]{Definition}
\def    \inv    {^{-1}}
\def    \R	{{\mathbb R}}
\def    \P	{{\mathbb P}}
\def    \N	{{\mathbb N}}
\def    \Q	{{\mathbb Q}}
\def    \C	{{\mathbb C}}
\def    \Z  {{\mathbb Z}}
\def    \e  {{\epsilon }}
\def    \l  {{\lambda }}
\def \li {\textrm{Lie}}
\def \col {{\mathcal O}_{\lambda}}
\def\eor{\unskip\ \hglue0mm\hfill$\diamond$\smallskip\goodbreak}
\begin{document}

\title[The Gromov width of orbits of the symplectic group]{The Gromov width of coadjoint orbits of the symplectic group}

\author[Iva Halacheva]{Iva Halacheva}
\address{iva.halacheva@utoronto.ca \\ University of Toronto, 40 St. George Street, Toronto, M5S 2E4, Ontario, Canada}
\author[Milena Pabiniak]{Milena Pabiniak}
\address{pabiniak@math.uni-koeln.de\\Mathematisches Institut, Universit\"at zu K\"oln, Weyertal 86-90, D-50931 K\"oln, Germany}

\begin{abstract} In this work we prove that the Gromov width of a coadjoint orbit of the symplectic group through a regular point $\lambda$, lying on some rational line, is at least equal to:
$$\min\{\, \left|\left\langle \alpha^{\vee},\lambda \right\rangle \right|; \alpha^{\vee} \textrm{ a  coroot}\}.$$
Together with the results of Zoghi and Caviedes concerning the upper bounds, this establishes the actual Gromov width. 
This fits in the general conjecture that for any compact connected simple Lie group $G$, the Gromov width of its coadjoint orbit through $\lambda \in \li (G)^*$ is given by the above formula.
The proof relies on tools coming from symplectic geometry, algebraic geometry and representation theory:
we use a toric degeneration of a coadjoint orbit to a toric variety whose polytope is the string polytope arising from a string parametrization of elements of a crystal basis for a certain representation of the symplectic group.
\end{abstract}

\maketitle

\section{Introduction}
The non-squeezing theorem of Gromov motivated the question of finding the biggest ball that could be symplectically embedded into a given symplectic manifold $(M, \omega)$.
Consider the ball of {\bf capacity} $a$:
$$ B^{2N}_a = \big \{ (x_1,y_1,\ldots,x_N,y_N) \in  \R^{2N} \ \Big | \ \pi \sum_{i=1}^N (x_i^2+y_i^2) < a \big 
\} \subset \R^{2N}, $$
with the standard symplectic form
$\omega_{std} = \sum dx_j \wedge dy_j$.
The \textbf{Gromov width} of a $2N$-dimensional symplectic manifold $(M,\omega)$
is the supremum of the set of $a$'s such that $B^{2N}_a$ can be symplectically
embedded in $(M,\omega)$. It follows from Darboux's theorem that the Gromov width 
is positive unless $M$ is a point.

Coadjoint orbits form an important class of symplectic manifolds.
Let $K$ be a compact Lie group.
It acts on itself by conjugation
$$K \ni g:K \rightarrow K,\;\;\;g(h)=g h g \inv.$$
Associating to $g \in K$ the derivative of the above map, taken at the identity, $dg_e \colon T_eK \rightarrow T_eK$,
one obtains the adjoint action of $K$ on $\mathfrak{k}=\li (K)=T_eK$.
This induces the action of $K$ on $\mathfrak{k}^*=\li (K)^*$, the dual of its Lie algebra, called the {\bf coadjoint action}. Each orbit 
$\mathcal{O}\subset \li (K)^*$ of the coadjoint action
is naturally equipped with the {\bf Kostant-Kirillov-Souriau symplectic form}:
$$\omega_{\xi}(X^\#,Y^\#)=\langle \xi, [X,Y]\rangle,\;\;\;\xi \in \mathcal{O} \subset \li (K)^*,\;X,Y \in \li (K),$$
where $X^\#,Y^\#$ are the vector fields on $\li (K)^*$ corresponding to $X,Y \in \li (K),$ induced by the coadjoint $K$ action. 
The coadjoint action of $K$ on $\mathcal{O}$ is Hamiltonian, 
and the momentum map is the inclusion $\mathcal{O}\hookrightarrow \li (K)^*$. 
Every coadjoint orbit intersects a chosen positive Weyl chamber in a single point. Therefore there is a bijection between the coadjoint orbits and points in the positive Weyl chamber. Points in the interior of the positive Weyl chamber are called {\bf regular} points. The orbits corresponding to regular points are of maximal dimension. They are diffeomorphic to $K/T$, for $T$ a maximal torus of $K$, and are called {\bf generic orbits}.
For example, when $K=U(n,\C)$, the group of (complex) unitary matrices, a coadjoint orbit can be identified with the set of Hermitian matrices with
a fixed set of eigenvalues. The generic orbits are diffeomorphic to the manifold of full flags in $\C^n$.

In this note we concentrate on the (compact) symplectic group $K=\text{Sp}(n)=U(n,\mathbb{H})$. 
The main result of this manuscript is the following theorem.
\begin{theorem}\label{gromovmain}
Let $M:=\mathcal{O}_{\lambda}$ be the coadjoint orbit of $K=Sp(n)$
through a regular point $\lambda$ lying on some rational line in $\mathfrak k^*$,
 equipped with the Kostant-Kirillov-Souriau symplectic form.
The Gromov width of $M$ is at least the minimum 
$$\min\{\, \left|\left\langle \alpha^{\vee},\lambda \right\rangle \right|: \alpha^{\vee} \textrm{ a  coroot}\}.$$
\end{theorem}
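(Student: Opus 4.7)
\bigskip
\noindent\textbf{Proof plan.}

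The overall strategy is to exhibit an explicit symplectic embedding of the ball $B^{2N}_r$ into $\mathcal{O}_\lambda$, where $r$ denotes the minimum appearing in the statement, by pulling back a standard embedding on a toric model of $\mathcal{O}_\lambda$. Concretely, I would combine two mechanisms: (i) a toric degeneration of $\mathcal{O}_\lambda$ to a (singular) toric variety $X_0$ whose moment polytope is a string polytope $\Delta_\lambda^{\mathbf{i}}$ attached to some reduced decomposition $\mathbf{i}$ of the longest Weyl-group element $w_0$ of $Sp(n)$; and (ii) a lemma that on any symplectic toric manifold (or a suitable open dense smooth locus of such), a ball of capacity $a$ embeds symplectically whenever the moment polytope contains, after a $GL(N,\Z)$-integral affine transformation, the open simplex $\{x \in \R^N_{>0} : \sum x_i < a\}$ of size $a$. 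Step (ii) is by now standard, going back to Karshon--Tolman and refined by Pabiniak; step (i), at the level of symplectic geometry, is supplied by the work of Harada--Kaveh, which upgrades a flat algebraic degeneration with appropriate properties to a family of symplectic embeddings from the open smooth part of $X_0$ into $\mathcal{O}_\lambda$ that preserves the symplectic form.

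First I would recall, from Littelmann and Berenstein--Zelevinsky, the construction of the string parametrization of a crystal basis $\mathcal{B}(\lambda)$ of the irreducible $Sp(n)$-representation of highest weight $\lambda$; the convex hull of such string vectors, after rescaling, produces the string polytope $\Delta_\lambda^{\mathbf{i}}$, which depends on a reduced word $\mathbf{i}$ for $w_0$. Next I would invoke the results of Anderson, and of Kaveh, realizing $\mathcal{O}_\lambda$ as the smooth locus of a projective variety that admits a flat degeneration to the toric variety $X_0$ with moment polytope $\Delta_\lambda^{\mathbf{i}}$; Harada--Kaveh then provides a surjective continuous map $\phi: X_0 \to \mathcal{O}_\lambda$ which is a symplectomorphism from a dense open subset of $X_0$ onto an open dense subset of $\mathcal{O}_\lambda$ whose complement is of measure zero. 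Embedding a ball into $\mathcal{O}_\lambda$ therefore reduces to embedding it into the toric model, and ultimately to finding a large open simplex inside $\Delta_\lambda^{\mathbf{i}}$.

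The heart of the argument, and the main technical obstacle, is the combinatorial step: choose a reduced word $\mathbf{i}$ for $w_0$ in the Weyl group of $Sp(n)$ so that $\Delta_\lambda^{\mathbf{i}}$ contains, near some vertex, an open standard simplex of size exactly $r = \min_{\alpha^\vee}|\langle \alpha^\vee,\lambda\rangle|$. For type $C_n$ the string inequalities are considerably more intricate than in type $A$ (they are indexed by Littelmann paths and involve type-$C$ specific patterns), and I expect the right choice of $\mathbf{i}$ to be adapted to a natural indexing reminiscent of symplectic Gelfand--Tsetlin patterns. The combinatorial verification will amount to showing that for this $\mathbf{i}$ the polytope has a vertex $v_0$ at which every edge has integer length at least $r$, pointing along distinct primitive lattice directions that form a $\Z$-basis of the weight lattice; this produces the required standard simplex at $v_0$. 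The regularity of $\lambda$ ensures the polytope is full-dimensional, and the rationality hypothesis guarantees that after rescaling we may work with a genuine polarization so that Harada--Kaveh applies.

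Finally, putting the pieces together, the ball $B^{2N}_r$ embeds into the open smooth locus of $X_0$ via the Karshon--Tolman/Pabiniak simplex-embedding lemma, and is then transported into $\mathcal{O}_\lambda$ by $\phi$; since $r$ is arbitrarily close to $\min|\langle \alpha^\vee,\lambda\rangle|$ from below (the embedding only needs the open ball), this establishes the claimed lower bound. Combined with the matching upper bound of Zoghi and Caviedes recalled in the introduction, this determines the Gromov width exactly. The decisive and most delicate point remains the identification of the reduced word $\mathbf{i}$ and the explicit verification that $\Delta_\lambda^{\mathbf{i}}$ carries the required inscribed simplex of size $r$; this is where the symplectic geometry is entirely controlled by type-$C$ representation theoretic combinatorics.
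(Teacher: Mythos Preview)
Your plan is correct and follows essentially the same route as the paper: toric degeneration via Harada--Kaveh, identification of the Newton--Okounkov body with a string polytope via Kaveh, reduction to finding a large simplex in that polytope, and rescaling to pass from integral to rational $\lambda$. The one tactical point you leave open---the choice of reduced word and the verification of the inscribed simplex---is resolved in the paper by taking Littelmann's word $w_0=s_1(s_2s_1s_2)\cdots(s_n\cdots s_1\cdots s_n)$ and, rather than working directly in the string polytope, applying Littelmann's explicit affine map to the symplectic Gelfand--Tsetlin polytope $GT(\lambda)$; one checks this map is a $GL(n^2,\Z)$ transformation plus a translation, and then the simplex is found at the ``maximal'' vertex of $GT(\lambda)$ where the $n^2$ edge directions visibly form a $\Z$-basis with edge lengths $\lambda_{j-i+1}$.
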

If $\lambda= \lambda_1 \omega_1+\ldots + \lambda_n \omega_n$ where $\omega_1,\ldots,\omega_n$ are the fundamental weights, and $\lambda_j >0$, then the above minimum is equal to, as we explain in Section \ref{main proof}, $\min\{\lambda_1,\ldots,\lambda_n\}$.

This particular lower bound is important because it coincides with the known upper bound.
Zoghi proved in \cite{Zoghi} that for a compact connected simple Lie group $K$, the above formula gives an upper bound for the Gromov width of a regular indecomposable coadjoint $K$-orbit through $\lambda$ (\cite[Proposition 3.16]{Zoghi}). 
This result was later extended to non-regular orbits by Caviedes.
\begin{theorem}\cite[Theorem 8.3]{Caviedes}\cite[Proposition 3.16, regular orbits]{Zoghi}\label{caviedes}
Let $K$ be a compact connected simple Lie group.
The Gromov width of a coadjoint orbit $\col$ through $\lambda$, equipped with the Kostant-Kirillov-Souriau symplectic form, is at most
$$\min\{\, \left|\left\langle \alpha^{\vee},\lambda \right\rangle \right|:\  \alpha^{\vee} \textrm{ a  coroot and }\left\langle \alpha^{\vee},\lambda \right\rangle \neq0\}.$$
\end{theorem}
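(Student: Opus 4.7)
The plan is to obtain the upper bound from the standard Gromov non-squeezing obstruction: if a closed symplectic manifold $(M,\omega)$ carries a tame almost complex structure $J$ such that through every point there passes a $J$-holomorphic sphere of symplectic area exactly $A$, then the Gromov width of $M$ is at most $A$. The mechanism is monotonicity: for any symplectic embedding $\phi \colon B^{2N}_a \hookrightarrow M$, a $J$-holomorphic sphere through $\phi(0)$ pulls back under $\phi$ to a proper $(\phi^{*}J)$-holomorphic curve in $B^{2N}_a$ passing through the origin, and the standard monotonicity inequality for proper holomorphic curves in a Euclidean ball forces its area inside $\phi(B^{2N}_a)$ to be at least $a$, so that $a\le A$.

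To produce such spheres on $\col$, I would identify it with the generalised flag variety $K_\C/P_\lambda$, where $P_\lambda$ is the stabiliser parabolic, and equip it with its canonical $K$-invariant integrable complex structure $J$, which is compatible with the Kostant--Kirillov--Souriau form. For each coroot $\alpha^\vee$ with $\langle\alpha^\vee,\lambda\rangle\neq 0$, the root subgroup $SL(2)_\alpha \subset K_\C$ acts on $K_\C/P_\lambda$ with an orbit through the basepoint $[P_\lambda]$ that is a closed $J$-holomorphic $\CP^1$: the hypothesis $\langle\alpha^\vee,\lambda\rangle\neq 0$ is precisely what guarantees that $s_\alpha$ does not lie in the Weyl group of the Levi of $P_\lambda$, so the orbit is not a point. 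Identifying this $\CP^1$ with the $SU(2)$-coadjoint orbit of the orthogonal projection of $\lambda$ to the $\alpha$-plane $\t_\alpha^*$, a direct calculation shows that its symplectic area equals $|\langle\alpha^\vee,\lambda\rangle|$.

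Because $K$ acts by $J$-biholomorphisms and transitively on $\col$, the $K$-translates of this one $\CP^1$ yield a $J$-holomorphic sphere of the same area through every point. Choosing the coroot $\alpha^\vee$ that minimises $|\langle\alpha^\vee,\lambda\rangle|$ among those with $\langle\alpha^\vee,\lambda\rangle\neq 0$ and applying the monotonicity argument above delivers the stated upper bound.

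The main obstacle will be in executing the monotonicity step with full rigour: one must pull back the $J$-holomorphic sphere through $\phi(0)$ and justify that the resulting (possibly non-embedded, possibly multiply-covered) image in $B^{2N}_a$ is a proper $(\phi^{*}J)$-holomorphic curve to which the area-monotonicity estimate applies, the standard reference being the Lelong-type monotonicity lemma formalised in McDuff--Salamon. A secondary but important subtlety, essential for the passage from Zoghi's regular case to Caviedes's full statement, is to use $P_\lambda$ rather than a Borel and correspondingly to exclude those $\alpha^\vee$ with $\langle\alpha^\vee,\lambda\rangle=0$ from the minimum, since those coroots are roots of the Levi of $P_\lambda$ and produce only point orbits in $K_\C/P_\lambda$ rather than genuine $\CP^1$'s.
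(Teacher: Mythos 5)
This theorem is not proved in the paper; it is cited from Zoghi and Caviedes, so there is no in-paper proof to compare against. Evaluating your proposal against the approach of those references, however, reveals a genuine gap precisely at the step you yourself flagged as the ``main obstacle''.

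The issue is that the monotonicity lemma in a Euclidean ball, with the sharp constant $\pi r^2 = a$, applies to proper curves that are holomorphic for the \emph{standard} complex structure $J_{std}$ on $B^{2N}_a$. Pulling back a $J$-holomorphic sphere via a symplectic embedding $\phi$ produces a $\phi^{*}J$-holomorphic curve, and $\phi^{*}J$ is in general just some $\omega_{std}$-tame almost complex structure, for which the sharp monotonicity constant is not available. Consequently, the implication ``through every point of $(M,\omega)$ there passes a $J$-holomorphic sphere of area $A$ for one fixed tame $J$, hence Gromov width $\le A$'' is not true as stated. What is needed is the existence of $J$-holomorphic spheres in a fixed class of area $A$ for \emph{every} tame $J$ --- in particular for a $J$ chosen so that $\phi_{*}J_{std} = J|_{\phi(B^{2N}_a)}$, which is what makes the pullback $J_{std}$-holomorphic and lets monotonicity give the sharp bound. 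This robustness under deformation of $J$ is exactly what a nonvanishing genus-zero Gromov--Witten invariant with a point constraint provides, and establishing such a nonvanishing (using the quantum product or Seidel element structure of $\col$) is the heart of Zoghi's Proposition~3.16 and of Caviedes's Theorem~8.3. Exhibiting a holomorphic $\CP^1$ of area $|\langle\alpha^{\vee},\lambda\rangle|$ for the integrable $K$-invariant $J$, as you do, is the correct geometric input and does identify the relevant homology class, but it must be supplemented by the GW-nonvanishing argument before the non-squeezing mechanism can close the proof. Your observation about using $P_{\lambda}$ rather than $B$, and excluding coroots in the Levi, is correct and is indeed the key point in passing from Zoghi's regular case to Caviedes's general one.
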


Putting these results together we obtain the following corollary.
\begin{corollary}\label{cor gromovwidth}
The Gromov width of a coadjoint orbit $\col$ of $Sp(n)$ through a regular point $\lambda$ lying on some rational line in $\mathfrak k^*$,
is exactly 
$$\min\{\, \left|\left\langle \alpha^{\vee},\lambda \right\rangle \right|:\  \alpha^{\vee} \textrm{ a  coroot}\}.$$
\end{corollary}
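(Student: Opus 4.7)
The plan is to construct, for every $r$ strictly smaller than the claimed minimum, a symplectic embedding $B^{2N}_r \hookrightarrow \mathcal O_\lambda$; combined with Theorem \ref{caviedes}, this would prove Corollary \ref{cor gromovwidth}. The strategy, as signalled in the abstract, uses a toric degeneration of $\mathcal O_\lambda$ to a (typically singular) toric variety whose moment polytope is a string polytope of Littelmann, and then inscribes an explicit open simplex in that polytope.

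\textbf{Step 1 (Degeneration and transfer).} Identify $\mathcal O_\lambda$ with the flag variety $Sp(n,\C)/B$ carrying a Kähler form whose cohomology class corresponds to $\lambda$ via Borel--Weil; the rationality hypothesis guarantees, after a harmless rescaling, that $\lambda$ is integral and the associated line bundle is very ample. Invoke the flat toric degeneration (Caldero, Alexeev--Brion, Kaveh) of this polarized flag variety to the toric variety $X_{\Delta_\lambda}$ whose moment polytope is the string polytope $\Delta_\lambda$ built from a crystal basis of the irreducible $Sp(n,\C)$-representation $V_\lambda$ and a reduced decomposition of the longest Weyl group element. Then apply the Harada--Kaveh gradient--Hamiltonian flow to obtain a continuous surjection $X_{\Delta_\lambda} \to \mathcal O_\lambda$ that restricts to a symplectomorphism on an open dense subset.

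\textbf{Step 2 (Polytope to ball).} With the transfer in hand, the problem reduces to the toric setting: by a standard argument (Traynor, Karshon--Tolman, Lu), if $\Delta_\lambda$ contains an open simplex of the form $u + \{x \in \R_{>0}^N : \sum_i x_i < r\}$, then a ball of capacity $r$ embeds symplectically into $X_{\Delta_\lambda}$ inside the open dense toric locus, and this embedding then transports to $\mathcal O_\lambda$ via Step 1.

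\textbf{Step 3 (Combinatorics of the string polytope).} It remains to inscribe an open simplex of size arbitrarily close to $\min\{\lambda_1,\ldots,\lambda_n\}$ inside $\Delta_\lambda$. Choose a reduced word for the longest Weyl element whose associated string inequalities (from Littelmann or Nakashima--Zelevinsky, in type $C$) admit a tractable description; verify that the origin is a vertex of $\Delta_\lambda$; and exhibit $N$ edges along coordinate directions emanating from it whose lengths are each bounded below by the fundamental-weight coordinates $\lambda_i$. The identification $\min\{|\langle \alpha^\vee,\lambda\rangle|\} = \min\{\lambda_1,\ldots,\lambda_n\}$ stated after the theorem will close the gap between the size of this inscribed simplex and the bound claimed in Theorem \ref{gromovmain}.

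The main obstacle is Step 3. Type $C$ string polytopes are combinatorially substantially more delicate than those in type $A$, where analogous ball-embedding results have been obtained by Pabiniak and coauthors, and their defining inequalities depend sharply on the reduced word one picks. The real work is to find a reduced decomposition whose associated polytope has an essentially orthogonal corner at the origin of size $\min\{\lambda_i\}$, or alternatively to argue via a convexity or volume estimate that such a simplex must fit independently of this choice. A secondary, standard issue is that $X_{\Delta_\lambda}$ is usually singular, but this is handled routinely within the Harada--Kaveh framework.
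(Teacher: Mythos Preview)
Your Steps 1 and 2 match the paper's approach exactly: toric degeneration via Harada--Kaveh, with the Newton--Okounkov body identified (via Kaveh) with Littelmann's string polytope for the specific reduced word $w_0=s_1(s_2s_1s_2)\cdots(s_n\cdots s_1\cdots s_n)$, followed by Proposition~\ref{embedding} to convert an inscribed simplex into a ball embedding.

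Step 3 is where you diverge, and your concrete suggestion contains an error. The origin is indeed a simple vertex of $\Delta_{\underline{w_0}}(\lambda)$, but its edges are \emph{not} along coordinate directions: the chain inequalities $a_{i,i}\ge a_{i,i+1}\ge\cdots\ge a_{i,2n-i}\ge 0$ force the primitive edge vectors at the origin to be the partial sums $e_{i,i}+e_{i,i+1}+\cdots+e_{i,k}$ within each row. These still form a $\Z$-basis, so in principle one could compute the edge lengths directly from the $\lambda$-dependent string inequalities of Proposition~\ref{string polytope description} and try to bound them below by $\min\{\lambda_j\}$; but those inequalities are intricate enough that this is exactly the ``real work'' you anticipate, and you have not carried it out.

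The paper sidesteps this computation entirely. It invokes a further result of Littelmann: an explicit affine map \eqref{eqn stringtoGT} sending $\Delta_{\underline{w_0}}(\lambda)$ onto the type-$C$ Gelfand--Tsetlin polytope $GT(\lambda)$, and shows (Proposition~\ref{c is gt}, via a volume/lattice-point count) that this map is in $GL(n^2,\Z)$ up to translation. The simplex is then found in $GT(\lambda)$, whose defining ``betweenness'' inequalities $l_k\ge z_{1,k}\ge l_{k+1}$, $z_{i-1,j-1}\ge y_{i,j}\ge z_{i-1,j}$, $y_{i,j}\ge z_{i,j}\ge y_{i,j+1}$ are far more transparent. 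At the vertex $V_0$ where every coordinate equals its upper bound, the $n^2$ edges are immediately read off and each has length $\lambda_{j_0-i_0+1}$ for some pair $(i_0,j_0)$, giving the required $\min\{\lambda_j\}$ bound with essentially no computation (Theorem~\ref{gt fits simplex}). This detour through $GT(\lambda)$ is the missing idea that turns your outline into a proof.
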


What adds importance to our result is the fact that it is a special case of a general conjecture about the Gromov width of coadjoint orbits of compact Lie groups. Namely, it has been conjectured, and by now proved in many cases, that for any compact connected simple Lie group $K$, the Gromov width of its coadjoint orbit through $\lambda \in \li (K)^*$ is given by 
 the formula from Theorem \ref{caviedes},
i.e. it is the minimum over the positive results of pairings of $\lambda$ with coroots in the system.
Karshon and Tolman in \cite{KarshonTolman}, and independently Lu in \cite{Lu2}, showed that the Gromov width of complex Grassmannians (which are degenerate coadjoint orbits of $U(n,\C)$) is given by the above formula. Combining the results of Zoghi (\cite{Zoghi}) and Caviedes (\cite{Caviedes}) about upper bounds, and the results of the second author (\cite{Pabiniak}) about lower bounds, one proves that the Gromov width of (not necessarily regular) coadjoint orbits of
 $U(n,\C)$, $SO(2n,\R)$ and $SO(2n+1,\R)$ is also given by that formula. (The result for $SO(2n+1,\R)$ works only for orbits satisfying one mild technical condition; see \cite{Pabiniak} for more details).

To prove the main result we use tools from symplectic geometry, algebraic geometry and representation theory.
Here is a brief outline. 
Using the work of \cite{HK} one can construct a toric degeneration from the given coadjoint orbit $\mathcal{O}_{\lambda}$ to a toric variety.  By ``pulling back" the toric action from the toric variety one equips (an open dense subset of) $\mathcal{O}_{\lambda}$ with a toric action and can use its flow to construct embeddings of balls. 
If $\lambda$ is a dominant weight, there exists a particularly nice toric degeneration to a toric variety whose associated Newton-Okounkov body is the string polytope parametrizing a crystal basis for (the dual of) the irreducible representation with highest weight $\lambda$ (\cite{K}). Such string polytopes have been studied by Littelmann in \cite{L}, and using his work we prove Theorem \ref{gromovmain} for orbits $\mathcal{O}_{\lambda}$ with $\lambda$ a dominant weight. We then further extend this result to any regular $\lambda$ lying on a rational line in $\mathfrak k^*$.

The techniques used in this paper could be applied to other compact connected simple Lie groups to obtain a lower bound for the Gromov width by studying the structure of (more general) string polytopes. We do not pursue this idea here for the following reason. As the formula for the conjectured Gromov width is given in purely Lie-theoretic language, we believe that there should be a way of proving the (lower bound part of the) conjecture for all groups at once, by a proof described in purely Lie-theoretic language.

{\bf Organization.} In Section \ref{tools} we introduce the tools that are used in Section \ref{main proof} to prove the main result.

{\bf Acknowledgements.} The authors are very grateful to Kiumars Kaveh for explaining his work to us.
We also thank Yael Karshon, Joel Kamnitzer and Alexander Caviedes for useful discussions. We are very grateful to the anonymous referee for their corrections (a missing assumption that $\lambda$ is on a rational line) and pointing out misprints in the first version, as well as for their comments which improved the exposition of this paper.

The first author was supported by an NSERC Alexander Graham Bell CGS D and a Queen Elizabeth II graduate scholarship.
The second author was supported by the Funda\c{c}\~ao para a Ci\^encia e a Tecnologia (FCT), Portugal:
fellowship SFRH/BPD/87791/2012 and projects PTDC/MAT/117762/2010, EXCL/MAT-GEO/0222/2012.


\section{Tools} \label{tools}

\subsection{ Using a toric action to construct symplectic embeddings of balls.}
Toric geometry proves to be very helpful in finding lower bounds for the Gromov width. 
When a manifold $(M,\omega)$ is equipped with a Hamiltonian (so also effective) action of a torus 
$T$, one can use the flow of the vector field generated by this action to construct explicit embeddings of balls and 
therefore to obtain a lower bound for the Gromov width (a construction by Karshon and Tolman in \cite{KarshonTolman}).
If additionally the action is {\bf toric}, that is $\dim T= \frac 1 2 \dim M$, 
then more constructions are available
(see for example: \cite{Traynor}, \cite{Schlenk}, \cite{LMS}). 

Recall that a Hamiltonian action of a torus $T$ on a symplectic manifold $(M,\omega)$ 
gives rise to a momentum map $\mu \colon M \rightarrow \li (T)^*=:\Lambda_{\R}$, from $M$ to the dual of the Lie algebra of $T$, 
which we denote by $\Lambda_{\R}$. 
This map is unique up to a translation in $\Lambda_{\R}$. 
A manifold $M$ equipped with a Hamiltonian $T$ action is often called a {\bf Hamiltonian $T$-space}.
When $M$ is compact, the image $\mu(M)$ is a Delzant polytope. 
Identifying $ \Lambda_{\R}$ with $\R^{\dim T}$, we can view $\mu(M)$ as a polytope in $\R^{\dim T}$. 
Such an identification is not unique: it depends on the choice of a splitting of $T$ into a product of circles, 
and on the choice of an identification of the Lie algebra of $S^1$ with the real line $\R$. 
Changing the splitting of $T$ results in applying a $GL(\dim T, \Z)$ transformation to $\R^{\dim T}$, 
while changing the identification $\li(S^1) \cong \R$ results in rescaling. 
In this work, 
 $S^1 = \R /  \Z$, that is, the exponential map $\exp \colon \R=\li(S^1) \rightarrow S^1$ is given by $t \mapsto e^{2 \pi i t}$. With this convention, the momentum map for the standard 
$S^1$-action on $\C$ by rotation with speed $1$ is given (up to the addition of a constant) by 
$z \mapsto -\pi |z|^2$.

Consider the standard $T^n=(S^1)^n$ action on $\C^n$ where each circle rotates a corresponding copy of $\C$ with speed $1$, with
a momentum map 
$$(z_1,\ldots,z_n) \mapsto -\pi (|z_1|^2,\ldots, |z_n|^2).$$
The image of the $n$-dimensional ball of capacity $a$ (radius $\sqrt{\frac{a}{ \pi}}$)
centered at the origin is $(-1)$ times the standard simplex of size $a$
$$\Delta^n(a):=\left\{(x_1,\ldots,x_n) \in \R^n_{\geq 0}\,|\,\sum_{k=1}^{n}x_k < \pi \cdot (\sqrt{a/\pi})^2=a\right\}.$$
Moreover, simplices embedded in the momentum map image signify the existence of embeddings of balls, as the following result explains.

\begin{proposition}\cite[Proposition 1.3]{Lu}\cite[Proposition 2.5]{Pabiniak}\label{embedding}
For any connected, proper (not necessarily compact) Hamiltonian $T^n$-space $M^{2n}$ of dimension $2n$ let 
\begin{align*}
\mathcal{W}(\Phi(M))= \sup \{&a>0\,|\, \exists \; \Psi \in GL(n,\Z), x \in
\R^n,\textrm{ such that }\\
&\Psi (\Delta^n(a))+\,x \subset \Phi(M) \},
\end{align*}
where $\Phi$ is some choice of momentum map. Then the Gromov width of $M$ is at least $\mathcal{W}(\Phi(M))$.
 \end{proposition}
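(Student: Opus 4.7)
The goal is, for every $a < \mathcal{W}(\Phi(M))$, to produce a symplectic embedding $B^{2n}_a \hookrightarrow M$; this would immediately give the Gromov width bound. So fix $\Psi \in GL(n,\Z)$ and $x \in \R^n$ with $\Psi(\Delta^n(a)) + x \subset \Phi(M)$. First I would \emph{normalize}: $\Psi$ induces a Lie-group automorphism of $T^n = \R^n/\Z^n$, and precomposing the given $T^n$-action on $M$ with this automorphism yields a new Hamiltonian action on the \emph{same} symplectic manifold, with new moment map $\Psi^{-1} \circ (\Phi - x)$ whose image contains the standard simplex $\Delta^n(a)$. Since symplectic embeddings of balls into $(M,\omega)$ do not depend on the chosen Hamiltonian $T^n$-action or on the choice of moment map, it suffices to treat the case $\Delta^n(a) \subset \Phi(M)$.

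Next I would compare to the standard model. The ball $B^{2n}_a \subset \C^n$ with its standard toric action is itself a connected proper Hamiltonian $T^n$-space of dimension $2n$ whose moment map $\mu_{\mathrm{std}}(z) = -\pi(|z_1|^2,\ldots,|z_n|^2)$ has image exactly $-\Delta^n(a)$. After replacing $\Phi$ by $-\Phi$ (still a valid moment map), the hypothesis becomes $-\Delta^n(a) \subset \Phi(M)$, and the open subset $N := \Phi^{-1}(-\Delta^n(a)) \subset M$, with the restricted action and moment map, is itself a connected proper Hamiltonian $T^n$-space of dimension $2n$ with moment image exactly $-\Delta^n(a)$. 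The decisive step is then to invoke a Delzant-type uniqueness theorem: a connected, proper Hamiltonian $T^n$-space of dimension $2n$ is determined up to equivariant symplectomorphism by its moment image. Applied to $B^{2n}_a$ and $N$, this produces an equivariant symplectomorphism $B^{2n}_a \cong N$, which gives the desired embedding $B^{2n}_a \hookrightarrow M$.

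The main technical obstacle is justifying this Delzant-type uniqueness statement in the non-compact, half-open setting. Over the open interior of $-\Delta^n(a)$, where the action is free, the moment map realizes $N$ as a principal $T^n$-bundle over a convex open subset of $\R^n$; such a bundle is trivial, and the symplectic form is reconstructed canonically from the moment map, giving a unique equivariant model that agrees with the corresponding piece of $B^{2n}_a$. Along the coordinate-hyperplane faces $\{x_i = 0\}$ included in $\Delta^n(a)$—which may or may not sit on the boundary of $\Phi(M)$—one must match isotropy data before gluing: in both spaces the stabilizer along the $i$-th coordinate face is the $i$-th coordinate circle of $T^n$, a fact that on the $M$ side uses the Delzant condition at whichever face of $\Phi(M)$ the simplex reaches. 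The equivariant symplectic slice theorem then glues the boundary and interior models into a global equivariant symplectomorphism $B^{2n}_a \cong N$, completing the proof.
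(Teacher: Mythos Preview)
The paper does not supply its own proof of this proposition; it is quoted directly from \cite{Lu} and \cite{Pabiniak}. So there is no ``paper's proof'' to compare against, and I will simply assess your argument.

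There is a genuine gap. Your decisive step is the claim that $N=\Phi^{-1}(-\Delta^n(a))$ is equivariantly symplectomorphic to $B^{2n}_a$, justified by asserting that ``in both spaces the stabilizer along the $i$-th coordinate face is the $i$-th coordinate circle of $T^n$.'' This assertion is false in general. Nothing in the hypothesis $\Psi(\Delta^n(a))+x\subset\Phi(M)$ forces the coordinate faces of the transformed simplex to lie on $\partial\Phi(M)$; they may sit entirely in the interior of $\Phi(M)$. When they do, the $T^n$-action is free on all of $N$, so $N$ has \emph{no} nontrivial isotropy. In that situation $N$ is a trivial principal $T^n$-bundle over the half-open simplex, hence homotopy equivalent to $T^n$, whereas $B^{2n}_a$ is contractible: they are not even homotopy equivalent, let alone equivariantly symplectomorphic. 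Your appeal to Delzant-type uniqueness fails for exactly this reason: for proper non-compact toric spaces the moment image alone does not determine the space---one also needs to know which faces are actually realized.

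The route taken in the cited references avoids any equivariant identification with the ball. Over the \emph{open} simplex $\mathrm{int}(\Delta^n(a))$ the action is free and the preimage is equivariantly symplectomorphic to $\mathrm{int}(\Delta^n(a))\times T^n$ with $\sum dx_j\wedge d\theta_j$; this much of your argument is correct. One then invokes a separate embedding lemma (originating with Traynor and reused by Karshon--Tolman and in \cite{Pabiniak}) producing, for every $a'<a$, a \emph{non-equivariant} symplectic embedding $B^{2n}_{a'}\hookrightarrow \mathrm{int}(\Delta^n(a))\times T^n$. Since $\mathcal{W}(\Phi(M))$ is a supremum this suffices. That lemma is the missing idea in your proposal; it is not a consequence of any classification result and requires its own explicit construction.
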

 
 \subsection{Coadjoint orbits as flag varieties}
 \label{sec:coadj}
 Coadjoint orbits of compact Lie groups can be viewed as flag manifolds of complex reductive groups.
 This interpretation allows us to later construct toric degenerations of coadjoint orbits (Section \ref{toric degeneration}). 
 
 Let $G$ be a connected reductive group over $\C$ and $B$ a Borel subgroup.
 Denote by $\Lambda$ the weight lattice of $G$ and by $\Lambda^+$ the dominant weights. 
Let $K$ be the compact form of $G$ and $T$ its maximal torus. 
A generic coadjoint orbit of $K$, $K/T$, is diffeomorphic to the flag manifold $G/B$.
 To equip the manifold $G/B$ with a symplectic structure, fix $\lambda \in \Lambda^+$ and let $V_{\lambda}$ denote the finite dimensional irreducible representation of $G$ with highest weight $\lambda$. There exists a very ample $G$-equivariant line bundle $\mathcal{L}_{\lambda}$ on $G/B$ whose space of sections $H^0(G/B, \mathcal{L}_{\lambda})$ is isomorphic to $V_{\lambda}^*$ (Borel-Weil Theorem). 
 Embed $G/B$ into $ \P(H^0(G/B, \mathcal{L}_{\lambda})^*)$ (the Kodaira embedding),
 and use this embedding to pull back to $G/B$ the Fubini-Study symplectic structure.
If $\omega_{\lambda}$ denotes the symplectic structure on $G/B$ obtained this way, then
$(G/B, \omega_{\lambda} )$
 is symplectomorphic to the coadjoint orbit $\col$ with the Kostant-Kirillov-Souriau symplectic structure defined in the Introduction.
 
 In this manuscript, $G=\text{Sp}(2n,\C)$ and $K=\text{Sp}(n)=U(n,\mathbb{H})$.

\subsection{Obtaining a toric action via a  toric degeneration.}\label{toric degeneration}
Coadjoint orbits of a compact Lie group $K$ are naturally equipped with a Hamiltonian action of a maximal torus of $K$.
This action, however, is rarely toric. We remark that for $U(n,\C), SO(n,\R)$ a toric action can be constructed by Thimm's trick (\cite{Pabiniak}).

To obtain a toric action on a dense open subset of a coadjoint orbit of $\text{Sp}(n)$, we apply a method developed by Harada-Kaveh in \cite{HK} using toric degenerations.
We briefly sketch the main ingredients of their construction and for details direct the reader to \cite{HK}.

Consider the situation where $X$ is a $d$-dimensional projective algebraic variety, $\mathcal{L}$ an ample line bundle over $X$, and let $\C(X)$ denote the field of rational functions on $X$. 
Given a valuation $\nu \colon \C(X) \setminus \{0\} \rightarrow \Z^d$ with one-dimensional leaves, one builds an additive semi-group 
$$S=S(X,L,v,h)=\bigcup_{k>0}\{(k,v(f/h^k))\,|\, f \in L^{\otimes k}\setminus \{0\} \}.$$
and 
a convex body
$$\Delta(S)=\overline{\textrm{conv}(\bigcup_{k>0}\{x/k\,|\,(k,x) \in S\})},$$
in $\R^d$, called an {\bf Okounkov (or Newton-Okounkov) body}. 
Here $h$ is a fixed section of $\mathcal{L}$ and  $L^{\otimes k}$ denotes the image of the $k$-fold product $L\otimes \ldots \otimes L$ in $H^0(X,\mathcal{L}^{\otimes k})$.

  \begin{theorem}\cite[Proposition 5.1 and Corollary 5.3]{And}, \cite[Corollary 3.14]{HK}
\label{existence degeneration}
With the notation as above, assume in addition that $S$ is finitely generated. 
Then there exists a finitely generated, $\N$-graded, flat $\C[t]$-subalgebra $\mathcal{R} \subset \C(X)[t]$ inducing a flat family 
$\pi \colon \mathfrak{X} =Proj\, \mathcal{R} \rightarrow \C$ such that:
\begin{itemize}
\item For any $z \neq 0$ the fiber $X_z=\pi^{-1}(z)$ is isomorphic to $X=Proj \,\C(X)$, i.e. $\pi^{-1}(\C \setminus \{0\})$ is isomorphic to $X \times (\C \setminus \{0\})$.
\item The special fiber $X_0=\pi^{-1}(0)$ is isomorphic to $Proj\, \C[S]$ and is equipped with an action of $(\C^*)^d$, where $d=\dim_\C X$. 
The normalization of the variety $Proj \,\C[S]$ is the toric variety associated to the rational polytope $\Delta(S)$.
\end{itemize}
\end{theorem}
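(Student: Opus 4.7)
The plan is to realize $\mathfrak{X}$ as $\mathrm{Proj}$ of a Rees algebra associated to a $\Z$-filtration on the graded ring $R := \bigoplus_{k \geq 0} L^{\otimes k}$ induced by the valuation $\nu$; since $\mathcal{L}$ is ample and $L$ generates the section ring, $X \cong \mathrm{Proj}\, R$. For nonzero $s \in L^{\otimes k}$, set $\widetilde{\nu}(s) := \nu(s/h^k) \in \Z^d$, which is additive on products of sections. Fixing a total order on $\Z^d$ compatible with addition and refining the ordering underlying $\nu$ produces a decreasing $\Z^d$-filtration $F^{\succeq v}_k R$ on each graded component of $R$.

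The associated graded algebra $\bigoplus_{k,v} F^{\succeq v}_k R \, /\, F^{\succ v}_k R$ is isomorphic to the semigroup algebra $\C[S]$: the \emph{one-dimensional leaves} hypothesis on $\nu$ forces each nonzero quotient to be one-dimensional, with the pairs $(k, v)$ admitting a section realizing $\widetilde{\nu}(s) = v$ being precisely the elements of $S$. Using finite generation of $S$, select a linear functional $\ell : \Z^d \to \Z$ that is strictly positive on all generators of $S$; this collapses the multi-filtration into an exhaustive $\N$-filtration $F_{\leq a} R := \mathrm{span}\{ s : \ell(\widetilde{\nu}(s)) \leq a \}$ whose associated graded remains $\C[S]$, now with the $\Z$-grading induced by $\ell$.

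Next, form the Rees algebra $\mathcal{R} := \bigoplus_{a \geq 0} (F_{\leq a} R)\, t^a \subset R[t] \subset \C(X)[t]$ and set $\mathfrak{X} := \mathrm{Proj}\, \mathcal{R}$, with $\pi$ induced by $\C[t] \hookrightarrow \mathcal{R}$. Finite generation of $\mathcal{R}$ over $\C[t]$ follows from finite generation of $S$; flatness over $\C[t]$ is automatic, since $\mathcal{R}$ is torsion-free over the PID $\C[t]$. Inverting $t$ yields an isomorphism $\mathcal{R}[t^{-1}] \cong R[t, t^{-1}]$ via $s\, t^a \mapsto s$ (using exhaustivity), so $\pi^{-1}(\C \setminus \{0\}) \cong X \times (\C \setminus \{0\})$. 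The reduction $\mathcal{R}/t\mathcal{R}$ is exactly the associated graded, identifying $X_0$ with $\mathrm{Proj}\, \C[S]$; the $(\C^*)^d$-action on $X_0$ arises from the $\Z^d$-grading on $\C[S]$. Finally, the normalization of $\C[S]$ is the saturation $\C[\mathrm{sat}(S)]$, and since $\R_{\geq 0} \cdot S$ equals the cone over $\Delta(S)$ with its natural integer structure, this saturation furnishes the toric variety associated to $\Delta(S)$.

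The main technical obstacle is the identification of the associated graded with $\C[S]$ in the second paragraph. It rests critically on the one-dimensional leaves hypothesis together with the multiplicativity $\widetilde{\nu}(s_1 s_2) = \widetilde{\nu}(s_1) + \widetilde{\nu}(s_2)$ without cancellation of leading terms, implicit in $\nu$ being a valuation. A secondary subtlety is the existence of $\ell$: it must be strictly positive on a finite generating set of $S$—exactly where finite generation is used—to ensure that the resulting $\N$-filtration is exhaustive on each graded piece and that the associated graded is preserved.
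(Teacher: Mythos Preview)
The paper does not give its own proof of this theorem: it is quoted verbatim from \cite[Proposition 5.1 and Corollary 5.3]{And} and \cite[Corollary 3.14]{HK}, and no argument is supplied beyond the citation. Your sketch is therefore not being compared to anything the authors wrote; rather, you are reconstructing the proof from the cited sources.

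That said, your outline is essentially Anderson's argument and is correct in substance. The Rees algebra construction, the identification $\mathcal{R}/t\mathcal{R}\cong \mathrm{gr}\,R\cong\C[S]$ via one-dimensional leaves, flatness from torsion-freeness over $\C[t]$, and the normalization statement are all standard and accurately described. One small imprecision: the linear functional $\ell$ should be defined on $\Z\times\Z^d$ (or at least interact with the $k$-grading), since the generators of $S$ are pairs $(k,v)$; your phrasing ``$\ell:\Z^d\to\Z$ strictly positive on all generators of $S$'' conflates the two. In practice one takes $\ell$ positive on the finitely many $v$-components of a generating set and large enough in the $k$-direction to make the induced $\N$-filtration exhaustive on each $L^{\otimes k}$, exactly as you intend. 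With that adjustment the argument goes through.
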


Fix a Hermitian structure on the very ample line bundle $\mathcal{L}$ and equip $X$ with the symplectic structure $\omega$ induced from the Fubini-Study form on $\P(H^0(X,\mathcal{L})^*)$ via the Kodaira embedding.

\begin{theorem}\cite[Theorem 3.25]{HK}\label{existence integrable system}
With the notation as above, assume in addition that $(X, \omega)$ is smooth and that the semigroup $S$ is finitely generated.
 Then:
\begin{enumerate}
\item There exists an integrable system $\mu=(F_1,\ldots, F_d) \colon X \rightarrow \R^d$ on $(X, \omega)$ in the sense of \cite[Definition 1]{HK}, 
and the image of $\mu$ coincides with the Newton-Okounkov body $\Delta=\Delta(S)$.
\item The integrable  system generates a torus action on the inverse image under $\mu$ of the interior of the moment polytope $\Delta$.\footnote{In fact the action is defined on the set $U$ introduced in \cite[Definition 1]{HK}, which contains but might be strictly bigger than the inverse image under $\mu$ of the interior of the moment polytope $\Delta$.}
\end{enumerate}
\end{theorem}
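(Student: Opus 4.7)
The plan is to produce a symplectomorphism between an open dense subset of $X$ and the smooth locus of the toric special fibre $X_0$, and then pull back the standard toric moment map to obtain the integrable system on $X$. By Theorem \ref{existence degeneration} we have a flat family $\pi\colon \mathfrak{X}\to \C$ whose generic fibre is (isomorphic to) $X$ and whose special fibre $X_0$ has normalization equal to the toric variety $X_\Delta$ for the Newton-Okounkov body $\Delta = \Delta(S)$. On the smooth locus $X_0^{\mathrm{sm}}$ there is a natural Hamiltonian $(S^1)^d$-action with moment map $\mu_0\colon X_0^{\mathrm{sm}} \to \Delta$ whose image is $\Delta$, and $\mu_0^{-1}(\mathrm{int}\,\Delta)$ is the open free orbit.

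I would equip a projective closure of $\mathfrak{X}$ with a K\"ahler form $\Omega$ that restricts to $\omega$ on each smooth fibre and to the natural toric K\"ahler form on $X_0^{\mathrm{sm}}$; such a form is obtained by picking an ambient projective embedding in which every fibre is embedded by its Kodaira map. Since $\pi$ is holomorphic, its real part $h := \mathrm{Re}\,\pi$ is a smooth function, and on the smooth locus of $\mathfrak{X}$ away from $\mathrm{Crit}(h)$ I would define the \emph{gradient-Hamiltonian vector field}
\[
V \;:=\; -\frac{\nabla h}{|\nabla h|^2},
\]
where $\nabla h$ is the $\Omega$-gradient of $h$. A direct check shows that $V$ is tangent to the level sets of $\mathrm{Im}\,\pi$, moves fibres of $\pi$ to fibres, and is a fibrewise symplectomorphism; integrating from time $0$ to time $1$ produces a symplectomorphism $\varphi\colon X\setminus Z \to X_0^{\mathrm{sm}}\setminus Z'$, where $Z\subset X$ and $Z'\subset X_0^{\mathrm{sm}}$ are measure-zero subsets consisting of trajectories that hit critical points of $h$ or limit into the singular locus of $X_0$.

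Setting $\mu := \mu_0 \circ \varphi$ on $X\setminus Z$ and extending continuously to $X$ (using compactness of $X$ and boundedness of $\Delta$), I obtain a map $\mu = (F_1,\ldots,F_d)\colon X\to \R^d$ with image equal to $\Delta$. The integrable system property and the torus action on $\mu^{-1}(\mathrm{int}\,\Delta)$ are inherited from the toric structure on $X_0$: the $F_i$ Poisson-commute and their flows generate commuting circle actions because $\varphi$ is a symplectomorphism and $\mu_0^{-1}(\mathrm{int}\,\Delta) \subset X_0^{\mathrm{sm}}$ avoids the singular locus entirely, so the action pulls back smoothly. The main technical obstacle is controlling $V$ near $\mathrm{Sing}(X_0)$ and $\mathrm{Crit}(h)$: since $X_0$ is typically not smooth, one has to verify that the trajectories of $V$ actually converge as $h\to 0$ and that the resulting map onto $X_0^{\mathrm{sm}}$ has image of full measure. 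The standard way to handle this is to first pass to a resolution $\widetilde{\mathfrak{X}} \to \mathfrak{X}$, perform the gradient-Hamiltonian flow argument there where everything is smooth, and then check that the flow and the pullback of $\mu_0$ descend to $\mathfrak{X}$; the Hamiltonian nature of $V$ ensures that the image of $\mu$ is exactly $\Delta$ and that the indeterminacies on $Z$ do not obstruct the torus action on $\mu^{-1}(\mathrm{int}\,\Delta)$.
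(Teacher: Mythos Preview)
The paper does not prove this theorem: it is quoted verbatim from Harada--Kaveh \cite[Theorem 3.25]{HK} and used as a black box. There is therefore no ``paper's own proof'' to compare your proposal against.

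That said, your outline is essentially the Harada--Kaveh strategy: construct the flat family of Theorem~\ref{existence degeneration}, equip the total space with a suitable K\"ahler form, and use the gradient-Hamiltonian vector field (after Ruan) to flow the generic fibre onto the special fibre, then pull back the toric moment map from $X_0$. Your identification of the main technical difficulty---controlling the flow near $\mathrm{Sing}(X_0)$ and the critical locus, and handling this via a resolution---is also accurate and is precisely where the bulk of the work in \cite{HK} lies.

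One point in your sketch is too glib: the continuous extension of $\mu$ from $X\setminus Z$ to all of $X$ does \emph{not} follow merely from ``compactness of $X$ and boundedness of $\Delta$''. Those facts only give that any sequence has a convergent subsequence in $\Delta$; they do not by themselves guarantee a well-defined limit, i.e.\ that different sequences approaching the same point of $Z$ yield the same value. In \cite{HK} this continuity is established by a more careful analysis of the flow and of the structure of the limiting fibre (in particular using that the surjective map to $X_0$ extends continuously). If you were writing this up as an actual proof rather than an outline, this step would need a genuine argument.
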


In this manuscript we use valuations (with one dimensional leaves) coming from the following examples.
\begin{example}\label{highest term val} (\cite[Example 3.3]{HK})
Fix a linear ordering on $\Z^d$. 
Let $p$ be a smooth point in $X$, and let $u_1,\ldots,u_{d}$ be a regular system of parameters in a neighborhood of $p$.
Using this system, we can construct the lowest and the highest term valuations on $\C(X)$: the {\bf lowest (resp. highest) term valuation} $v_{low}$ (resp. $v_{high}$) assigns to each $f (u_1,\ldots,u_d)=\sum_{j=(j_1,\ldots,j_d)}c_j u_1^{j_1}\ldots u_d^{j_d}\in \C(X)$ a $d$-tuple of integers 
which is the smallest (resp. biggest) among $j=(j_1,\ldots,j_d)$ with $c_j\neq 0$, in the fixed order. 
To a rational function $f/h \in \C(X)$ this valuation assigns $v_{low}(f)-v_{low}(h)$ (resp. $v_{high}(f)-v_{high}(h)$).
Both of these valuations have one dimensional leaves. 
\eor
\end{example}

\begin{example}\label{highest term for flags}
What will be very relevant for this manuscript is a special case of the previous Example.
In the situation we consider here, $X$ is the flag variety $G/B$ of the symplectic group $G=\text{Sp}(2n, \C)$, with $B$ a fixed Borel subgroup of $G$. 
Choose a reduced decomposition $\underline{w_0}=(\alpha_{i_1},\ldots, \alpha_{i_N})$ of the longest word in the Weyl group $w_0=s_{\alpha_{i_1}}\cdots s_{\alpha_{i_N}}$, where $s_{\alpha_i}$ is the reflection through the hyperplane orthogonal to the simple root $\alpha_i$:
	$$s_{\alpha_i}(\beta)=\beta-2\frac{\left\langle\beta,\alpha_i\right\rangle}{\left\langle\alpha_i,\alpha_i\right\rangle}\alpha_i$$ 
It defines a sequence of (Schubert) subvarieties (a Parshin point)
$$\{o\}=X_{w_N}\subset \ldots \subset X_{w_0}=X,$$
where $X_{w_k}$ is the Schubert variety corresponding to the Weyl group element $w_k=s_{\alpha_{i_{k+1}}}\cdots s_{\alpha_{i_N}}$, and $\{o\}$ is the unique $B$-fixed point in $X$.  This sequence of varieties, in turn, gives rise to a regular system of parameters $u_1,\ldots,u_{d}$, in which $X_{w_k}=\{u_1=\ldots=u_k=0\}$ (see Section 2.2 of \cite{K}). Following Kaveh (\cite{K}), we denote the associated highest term valuation (as in Example \ref{highest term val}) on $\C(X) \setminus \{0\}$ by $v_{\underline{w_0}}$.\label{our valuation}\eor
\end{example}

\subsection{Crystal bases and Newton-Okounkov bodies.}

We now return to analyzing the flag manifold. With $G$, $B$, $\lambda \in \Lambda^+$, $V_{\lambda}$, and $\mathcal{L}_{\lambda}$ as in Section \ref{sec:coadj}, recall that $G$ acts on the space of sections $H^0(G/B,\mathcal{L}_{\lambda})$ giving a representation isomorphic to the dual representation $V^{\ast}_{\lambda}$.
There exists a particular toric degeneration of the flag variety $G/B$ for which the associated Okounkov body is the string polytope parametrizing the elements of a crystal basis of the representation $V^{\ast}_{\lambda}$. 
Before analyzing this toric degeneration, we recall some basic facts about crystal bases.

Let $I$ denote the Dynkin diagram, and $\{\alpha_i\}_{i \in I},\{\alpha^{\vee}_i\}_{i \in I}$ denote the simple roots and coroots respectively. 
We will look at the perfect basis for $V^{\ast}_{\lambda}$ coming from the specialization of Lusztig's canonical basis to $q=1$ for the quantum enveloping algebra, which Kaveh refers to as a crystal basis for $V^{\ast}_{\lambda}$ in \cite{K}. Note that this differs from Kashiwara's notion of crystal basis being the specialization at $q=0$. 

A {\bf perfect basis} for a finite-dimensional representation $V$ of $G$ is a weight basis $B_V$ of the vector space $V$ together with a pair of operators, called Kashiwara operators,
$\tilde{E}_{\alpha}, \; \tilde{F}_{\alpha}: B_V \rightarrow B_V\cup\{0\}$
 for each simple root $\alpha$, and maps $\tilde{\varepsilon}_{\alpha}, \tilde{\phi}_{\alpha}: V\setminus\{0\} \rightarrow \mathbb{Z}$ satisfying certain compatibility conditions. For further information, we refer the reader to \cite[Section 3.1]{K}.
 
One can associate to a perfect basis $B_V$ a directed labeled graph, called the {\bf crystal graph of the representation $V$}, whose vertices are the elements of $B_V \cup \{0\}$, and whose directed edges are labeled by the simple roots following the rule:  there is an edge from $b$ to $b'$ labeled $\alpha$ if and only if $\tilde{E}_{\alpha}(b)=b'$ (equivalently $\tilde{F}_{\alpha}(b')=b$). Also there is an edge from $b$ to $0$ if $\tilde{E}_{\alpha}(b)=0$, and from $0$ to $b$ if  $\tilde{F}_{\alpha}(b)=0$. The graphs obtained in this way are isomorphic for each perfect basis of the given $G$-representation $V$ (\cite[Theorem 5.55]{BK}).

A perfect basis $B_{\lambda}$ for the representation $V_{\lambda}$ with highest weight vector $v_{\lambda}$ can be obtained by considering the nonzero elements $gv_{\lambda}$ where $g$ is an element in the specialization to $q=1$ of the Lusztig canonical basis of the quantum enveloping algebra of $G$. The dual basis $B^{\ast}_{\lambda}$ is then a perfect basis for the dual representation $V^{\ast}_{\lambda}$, and will be referred to as the {\bf dual crystal basis} (see \cite[Lemma 5.50]{BK}).
The crystal $B_{\lambda}$ can be thought of as a combinatorial realization of $V_{\lambda}$ and reflects its internal structure. For more information about crystals see \cite{BK}, \cite{HoKa}, \cite{HeKa}.

There exists a nice parametrization of the elements of a (dual) crystal basis, called the {\bf string parametrization}, by integral points in $\Z^N$ where $N$ is the length of the longest word in the Weyl group $W$. This parametrization depends on a choice of a reduced decomposition $\underline{w_0}=(\alpha_{i_1},\ldots, \alpha_{i_N})$ of the longest word $w_0=s_{\alpha_{i_1}}\cdots s_{\alpha_{i_N}}$ in $W$:
$$\iota_{\underline{w_0}}\colon \coprod_{\lambda \in \Lambda^+} B_{\lambda}^* \rightarrow \Lambda^+ \times \Z_{\geq 0}^{N},$$
$$\iota_{\underline{w_0}}(B^{\ast}_{\lambda}) \subset \{\lambda\} \times  \Z_{\geq 0}^{N}.$$
The image of $\iota_{\underline{w_0}}$ is the intersection of a rational convex polyhedral cone $\mathcal{C}_{\underline{w_0}}$ in $\Lambda_{\R} \times \R^N$ with the lattice $ \Lambda \times \Z^N$. The projection of $\mathcal{C}_{\underline{w_0}}$ to $\R^N$ is a rational polyhedral cone in $\R^N$, called the {\bf string cone}, and will be denoted by $C_{\underline{w_0}}$. Littelmann analyzed in \cite{L} the image of string parametrizations (see also \cite[Theorem 1.1]{AB} and \cite[Theorem 3.4]{K}).

\begin{theorem}\cite[Proposition 1.5]{L}
For any dominant weight $\lambda$, the string parametrization is one-to-one. Moreover, $S_{\lambda}:=\iota_{\underline{w_0}}(B^{\ast}_{\lambda})$ is the set of integral points of a convex rational polytope $\Delta_{\underline{w_0}}(\lambda) \subset \R^N$ obtained as the intersection of the string cone, $C_{\underline{w_0}}$, and the $N$ half-spaces
$$x_k \leq \langle \lambda, \alpha_{i_k}^{\vee} \rangle - \sum_{l=k+1}^{N}\,x_l\,\langle \alpha_{i_l}, \alpha_{i_k}^{\vee} \rangle,\ \ k=1,\ldots, N.$$
\end{theorem}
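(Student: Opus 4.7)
The plan is to split the statement into three claims: injectivity of $\iota_{\underline{w_0}}$ on each $B^*_\lambda$, the inclusion $S_\lambda \subseteq \Delta_{\underline{w_0}}(\lambda) \cap \Z^N$, and the reverse inclusion that every integer point of $\Delta_{\underline{w_0}}(\lambda)$ is attained. The first two parts reduce to weight bookkeeping and general properties of Kashiwara operators; the last is the combinatorial heart of the theorem.

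For injectivity, I would use the fact that the string $(x_1,\ldots,x_N)$ of an element $b^* \in B^*_\lambda$ is produced by successively applying Kashiwara raising operators along $\underline{w_0}$: set $b_0 = b^*$, and at each stage take $x_k = \tilde{\varepsilon}_{\alpha_{i_k}}(b_{k-1})$ and $b_k = \tilde{E}^{x_k}_{\alpha_{i_k}} b_{k-1}$. Since $\underline{w_0}$ is a reduced decomposition of the longest element, the terminal element $b_N$ is the unique $\tilde{E}$-extremal element of $B^*_\lambda$, independent of $b^*$. Running the procedure backwards with the lowering operators gives $b^* = \tilde{F}^{x_1}_{\alpha_{i_1}} \cdots \tilde{F}^{x_N}_{\alpha_{i_N}} b_N$, so $b^*$ is determined by its string.

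For the inclusion $S_\lambda \subseteq \Delta_{\underline{w_0}}(\lambda) \cap \Z^N$, the inequalities cutting out the string cone $C_{\underline{w_0}}$ are intrinsic to crystal combinatorics: they encode feasibility of the successive Kashiwara moves and are independent of $\lambda$, so they hold automatically. The $N$ affine inequalities come from weight analysis combined with the crystal identity $\tilde{\phi}_{\alpha}(b) - \tilde{\varepsilon}_{\alpha}(b) = \langle \mathrm{wt}(b), \alpha^\vee\rangle$. By construction $\tilde{\varepsilon}_{\alpha_{i_k}}(b_k) = 0$, so the feasibility condition $x_k \leq \tilde{\phi}_{\alpha_{i_k}}(b_k)$ that must hold in the backward procedure translates to $x_k \leq \langle \mathrm{wt}(b_k), \alpha^\vee_{i_k}\rangle$. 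Using the recursion $\mathrm{wt}(b_k) = \mathrm{wt}(b_N) - \sum_{l > k} x_l \alpha_{i_l}$ together with the fact that the weight of the extremal element $b_N$ is determined by $\lambda$ yields the stated upper bound in closed form.

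The main obstacle will be the reverse inclusion: showing that every integer point of $\Delta_{\underline{w_0}}(\lambda)$ is actually attained as the string of some $b^* \in B^*_\lambda$. My plan is to follow Littelmann's path-model approach: realize $B^*_\lambda$ as a crystal of Lakshmibai--Seshadri paths, translate the string recipe into a combinatorial statistic on such paths, and prove by induction on the length $N$ of the reduced decomposition that the reachable strings exhaust the integer points of the polytope. The inductive step relies on the Demazure-type subcrystal structure of $B^*_\lambda$ cut out by the successive initial segments of $\underline{w_0}$, together with the compatibility of string parametrizations under the braid moves relating different reduced decompositions. An alternative route would pass through Lusztig's parametrization of the canonical basis and the explicit transition maps to string parametrizations; either way, this surjectivity step is where the genuine combinatorial work lies and constitutes the primary difficulty of the theorem.
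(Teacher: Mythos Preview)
The paper does not prove this statement; it is quoted as \cite[Proposition 1.5]{L} and used as a black box input to the later arguments, so there is nothing in the paper to compare your proposal against.

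For what it is worth, your three-step outline---injectivity by reconstructing $b$ from its string via the $\tilde{F}$-operators, the $\lambda$-dependent half-space inequalities from the crystal identity $\tilde{\phi}_\alpha - \tilde{\varepsilon}_\alpha = \langle \mathrm{wt}(\cdot), \alpha^\vee\rangle$ applied at the intermediate elements $b_k$, and surjectivity via the path model and induction along the reduced word---is the standard route and is essentially how Littelmann organizes the argument in \cite{L}. One small caution: you write throughout in terms of $B^*_\lambda$ and then assert that the terminal $\tilde{E}$-extremal element $b_N$ has weight ``determined by $\lambda$''; in $V^*_\lambda$ the highest weight is $-w_0\lambda$, not $\lambda$. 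The paper (following \cite{K}) is itself loose on this point, and the abstract crystals $B_\lambda$ and $B^*_\lambda$ are canonically identified so that the resulting string polytope is the same, but if you were writing this argument out in full you would need to make that identification explicit to get the inequalities with $\langle \lambda, \alpha_{i_k}^\vee\rangle$ on the right-hand side.
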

(Note that in \cite{K} the symbol $\mathcal{C}_{\underline{w_0}}$ denotes a slightly different object: the projection of $\mathcal{C}_{\underline{w_0}}$ from \cite{K} to $\R^N$ is ``our"  $C_{\underline{w_0}}$ already intersected with the above $N$ half-spaces).
\begin{definition}\label{string polytope}
The polytope $\Delta_{\underline{w_0}}(\lambda) \subset \R^N$ is called the {\bf string polytope} associated to $\lambda$.
\end{definition}

For integral $\lambda$ the vertices of the polytope $\Delta_{\underline{w_0}}(\lambda)$ are rational, thus the cone over $\Delta_{\underline{w_0}}(\lambda)$,
$$\text{Cone}(\Delta_{\underline{w_0}}(\lambda))=\{(t,tx); t \in \R_{\geq 0}, x \in \Delta_{\underline{w_0}}(\lambda)\} \subset \R \times \R^N$$
is a strongly convex rational polyhedral cone. 

In \cite{K}, Kaveh observed the following relation between the string polytopes and Newton-Okounkov bodies associated to certain valuations that we have described in Section \ref{toric degeneration}.

\begin{theorem}\cite[Theorem 1]{K} \label{thm crystal okounkov}
The string parametrization for a dual crystal basis of $V_\lambda^*=H^0(G/B, \mathcal{L}_\lambda)$ is the restriction of the valuation $v_{\underline{w_0}}$ and the string polytope $\Delta_{\underline{w_0}}(\lambda)$ coincides with the Newton-Okounkov body of the algebra of sections of $\mathcal{L}_\lambda$ and the valuation $v_{\underline{w_0}}$.
\end{theorem}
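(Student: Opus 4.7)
The plan is to establish Kaveh's theorem in two stages: first, that the values of $v_{\underline{w_0}}$ on sections coming from dual crystal basis elements agree with their string parameters; then, that this identification forces the Newton--Okounkov body to be the string polytope.

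For the first stage, I would realize the regular system of parameters of Example \ref{highest term for flags} concretely. The big Schubert cell $Y\subseteq G/B$ is isomorphic via the orbit map to the opposite unipotent subgroup $U^-$, and the reduced word $\underline{w_0}=(\alpha_{i_1},\ldots,\alpha_{i_N})$ yields a birational chart
$$\psi\colon\C^N\longrightarrow Y,\qquad \psi(t_1,\ldots,t_N)=x_{-i_1}(t_1)\cdots x_{-i_N}(t_N)\cdot o,$$
with $x_{-i}(t):=\exp(tf_i)$. A Bott--Samelson argument shows that in these coordinates $X_{w_k}\cap Y=\{t_1=\cdots=t_k=0\}$, so the $t_j$ may be taken as the system of parameters $u_j$ used to define $v_{\underline{w_0}}$. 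Trivializing $\mathcal{L}_\lambda$ on $Y$ by a fixed reference section $h$ and invoking Borel--Weil, each $b^*\in B_\lambda^*$ pulls back under $\psi$ to the polynomial
$$p_{b^*}(t)=\bigl\langle b^*,\,x_{-i_1}(t_1)\cdots x_{-i_N}(t_N)\,v_\lambda\bigr\rangle=\sum_{a\in\Z_{\geq0}^N}\frac{t^a}{a!}\bigl\langle b^*,\,f_{i_1}^{a_1}\cdots f_{i_N}^{a_N}v_\lambda\bigr\rangle,$$
and $v_{\underline{w_0}}(b^*/h)$ is the exponent $a$ maximal (in the chosen ordering on $\Z^N$) with nonvanishing coefficient.

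The heart of the argument, and what I expect to be the main obstacle, is to identify this maximal exponent with the string parameter $\iota_{\underline{w_0}}(b^*)$. The difficulty is that string parameters are defined purely combinatorially via iterated maximal powers of Kashiwara raising operators, while the leading monomial of $p_{b^*}$ reflects the analytic expansion of a $U^-$-orbit of $v_\lambda$ in the perfect basis. To bridge the two I would pass to the quantum enveloping algebra $U_q(\g)$ and use the known unitriangularity between Lusztig's canonical basis and the PBW basis attached to $\underline{w_0}$: dualizing and specializing to $q=1$ characterizes the leading support of $p_{b^*}$ as exactly the string parameter. An equivalent route is to invoke Berenstein--Zelevinsky--Littelmann tropicalization, which directly describes $\iota_{\underline{w_0}}$ as the lex-maximum of the same expansion.

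Once the first stage is established, Littelmann's theorem closes the argument. For every $k\geq 1$, applying $v_{\underline{w_0}}(\cdot/h^k)$ to the basis $B_{k\lambda}^*$ of $H^0(G/B,\mathcal{L}_\lambda^{\otimes k})$ produces exactly the lattice points $\Delta_{\underline{w_0}}(k\lambda)\cap\Z^N=(k\Delta_{\underline{w_0}}(\lambda))\cap\Z^N$, and since $B_{k\lambda}^*$ is a basis these points exhaust the degree-$k$ part of the semigroup $S$. As $k\to\infty$ the rescaled sets $(1/k)\bigl(k\Delta_{\underline{w_0}}(\lambda)\cap\Z^N\bigr)$ become dense in $\Delta_{\underline{w_0}}(\lambda)$, so taking the convex closure yields $\Delta(S)=\Delta_{\underline{w_0}}(\lambda)$, as claimed.
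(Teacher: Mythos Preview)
The paper does not supply its own proof of this theorem: it is quoted verbatim from Kaveh~\cite{K} and used as a black box, with only the corollary on finite generation drawn from it. Your proposal is, in outline, exactly Kaveh's argument in~\cite{K}: the birational chart on the big cell via the one-parameter root subgroups $x_{-i_k}(t_k)$ attached to $\underline{w_0}$, the polynomial expansion of a section against $f_{i_1}^{a_1}\cdots f_{i_N}^{a_N}v_\lambda$, and the identification of the lex-highest supporting exponent with the string parameter through the unitriangularity of the canonical basis against the PBW basis for $\underline{w_0}$ are precisely his ingredients. The second stage is also correct, though you are silently using two facts worth making explicit: that the valuation has one-dimensional leaves (so the values on the basis $B_{k\lambda}^*$ are pairwise distinct and hence exhaust the degree-$k$ slice of $S$), and that $\Delta_{\underline{w_0}}(k\lambda)=k\,\Delta_{\underline{w_0}}(\lambda)$ because Littelmann's inequalities are homogeneous of degree one in $\lambda$.
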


\begin{corollary}\label{finitely generated}
The semigroup associated to the valuation $v_{\underline{w_0}}$ is finitely generated.
\end{corollary}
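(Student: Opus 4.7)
The plan is to identify the semigroup $S = S(G/B, \mathcal{L}_\lambda, v_{\underline{w_0}}, h)$ with the set of positive-height lattice points in a strongly convex rational polyhedral cone and then invoke Gordan's lemma.

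First I would compute the image of $v_{\underline{w_0}}$ on nonzero sections at each level. Because $\mathcal{L}_\lambda^{\otimes k} = \mathcal{L}_{k\lambda}$, Theorem \ref{thm crystal okounkov} applied to the dominant weight $k\lambda$ identifies the image of $v_{\underline{w_0}}$ on $H^0(G/B, \mathcal{L}_{k\lambda}) \setminus \{0\}$ with the string parametrization of the dual crystal basis $B^*_{k\lambda}$, which by Littelmann's theorem is precisely $\Delta_{\underline{w_0}}(k\lambda) \cap \Z^N$. The subspace $L^{\otimes k}$ appearing in the definition of $S$ coincides with $H^0(G/B, \mathcal{L}_{k\lambda})$, since the natural multiplication map $H^0(\mathcal{L}_\lambda)^{\otimes k} \to H^0(\mathcal{L}_{k\lambda})$ is surjective for the ample line bundles $\mathcal{L}_\lambda$ on the flag variety (a standard consequence of the Borel-Weil theorem and the Cartan component decomposition).

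Next I would observe that the string polytope scales linearly in the weight: the string cone $C_{\underline{w_0}}$ is independent of $\mu$, and the $N$ additional inequalities
$$x_k \leq \langle \mu, \alpha_{i_k}^{\vee}\rangle - \sum_{l=k+1}^{N} x_l\, \langle \alpha_{i_l}, \alpha_{i_k}^{\vee}\rangle$$
are rescaled by a factor of $k$ when $\mu$ is replaced by $k\lambda$. Hence $\Delta_{\underline{w_0}}(k\lambda) = k\,\Delta_{\underline{w_0}}(\lambda)$. Accounting for the translation $y \mapsto y - k\,v_{\underline{w_0}}(h)$ induced by the division by $h^k$ (a lattice translation at each level $k$, since $v_{\underline{w_0}}(h) \in \Z^N$), the semigroup $S$ is identified with the positive-height lattice points of $\mathrm{Cone}(\Delta_{\underline{w_0}}(\lambda) - v_{\underline{w_0}}(h)) \subset \R \times \R^N$, which is a strongly convex rational polyhedral cone.

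By Gordan's lemma the lattice points of any rational polyhedral cone form a finitely generated semigroup, and removing the apex does not affect finite generation; therefore $S$ is finitely generated. The only steps requiring mild care are the identification $L^{\otimes k} = H^0(\mathcal{L}_{k\lambda})$ and the linear scaling $\Delta_{\underline{w_0}}(k\lambda) = k\,\Delta_{\underline{w_0}}(\lambda)$, neither of which presents a real obstacle since both are direct consequences of standard facts already available in the paper.
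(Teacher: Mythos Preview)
Your argument is correct and follows essentially the same route as the paper: invoke Theorem~\ref{thm crystal okounkov} to identify the level-$k$ piece of the semigroup with the lattice points of $k\,\Delta_{\underline{w_0}}(\lambda)$, recognize the resulting semigroup as the positive-height lattice points in the strongly convex rational polyhedral cone $\mathrm{Cone}(\Delta_{\underline{w_0}}(\lambda))$, and apply Gordan's Lemma. You have in fact supplied more detail than the paper's one-sentence sketch, making explicit the surjectivity $L^{\otimes k} = H^0(\mathcal{L}_{k\lambda})$, the linear scaling of the string polytope, and the harmless lattice shear coming from the choice of $h$; none of these change the strategy.
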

This is a consequence of Theorem \ref{thm crystal okounkov}, the observation above that the cone $\text{Cone}(\Delta_{\underline{w_0}}(\lambda))\subset \R \times \R^N$ over $\Delta_{\underline{w_0}}(\lambda)$ is a strongly convex rational polyhedral cone, and Gordon's Lemma.


\section{Proof of the main result} \label{main proof}
We aim to prove that the Gromov width of a generic coadjoint orbit $\mathcal{O}_{\lambda}$ of $\text{Sp}(n)$, passing through a point $\lambda$ in the interior of a chosen positive Weyl chamber and on a rational line, equipped with the Kostant-Kirillov-Souriau symplectic form,
is $$ \min \{ |\langle \lambda , \alpha^{\vee}  \rangle |;\ \alpha^{\vee} \textrm{ a coroot }\}.$$

Recall that all generic coadjoint orbits $\col$ are diffeomorphic to the flag manifold $G/B$, for $G=\text{Sp}(2n, \C)$.
For $i=1,\ldots, 2n$, let $\e_i: \mathfrak{sp}(2n,\C) \rightarrow \C$ denote the linear functional assigning to a matrix its $i$-th diagonal entry, $\e_i(x)=x_{ii}$. With this notation we can express the simple roots as:
\begin{equation}\label{list of roots}
\alpha_n=\e_1 - \e_2, \; \alpha_{n-1}=\e_2 - \e_3, \; \hdots, \; \alpha_2=\e_{n-1} - \e_n,\; \alpha_1=2\e_n.
\end{equation}
Note that the above enumeration is non-standard. We follow Littelmann's enumeration (\cite{L}), as we are going to quote some results from \cite{L}. All the roots are given by $\pm 2\e_i$ and $\pm(\e_i\pm \e_j), \; i \neq j$. The fundamental weights are $\omega_i=\e_1+\e_2+\hdots+\e_i$, $i=1,2,\hdots,n$ and each $\lambda \in \Lambda_\R^+$ can be expressed as:
$$\begin{array}{rl} \l&=\l_1\omega_1 + \l_2\omega_2 + \hdots + \l_n\omega_n \quad (\l_i \geq 0 ) \\
                    &=(\l_1+\l_2+\hdots + \l_n)\e_1 + (\l_2+\hdots+\l_n)\e_2+\hdots+\l_n\e_n.
\end{array}$$
Then 
$$ \min \{ |\langle \lambda , \alpha^{\vee}  \rangle |;\ \alpha^{\vee} \textrm{ a coroot }\}=\min \{ \lambda_1,\ldots, \lambda_n\}.$$

We first analyze the situation when
$\lambda $ {\it is integral}.
Then $\lambda$ is a dominant weight and thus there exists a very ample line bundle $\mathcal{L}_{\lambda}$ on $G/B$ whose space of sections $H^0(G/B, \mathcal{L}_{\lambda})$ is isomorphic to $V_{\lambda}^*$. 
 The very ample line bundle $\mathcal{L}_{\lambda}$ induces the Kodaira embedding $j_\lambda \colon G/B \hookrightarrow \P(H^0(G/B, \mathcal{L}_{\lambda})^*)$ and 
 one can use $j_\lambda$ to pull back the Fubini-Study symplectic structure from the projective space to $G/B$.
 The thus obtained symplectic manifold
 $(G/B, \omega_{\lambda}=j_\lambda^*(\omega_{FS})\,)$
 is symplectomorphic to $\col$ with the standard Kostant-Kirillov-Souriau symplectic structure.

As explained in Section \ref{tools} (page \pageref{our valuation}), a choice of a reduced decomposition $\underline{w_0}=(\alpha_{i_1},\ldots, \alpha_{i_N})$ of the longest word $w_0=s_{\alpha_{i_1}}\cdots s_{\alpha_{i_N}}$ in the Weyl group gives rise to a highest term valuation $v_{\underline{w_0}}$ with one-dimensional leaves,
  and to a semigroup $S$ with the associated Newton-Okounkov body $\Delta(S)$. This semigroup is finitely generated (Corollary \ref{finitely generated}).
  Theorems \ref{existence degeneration}, \ref{existence integrable system} and \ref{thm crystal okounkov} imply the following:
  
\begin{corollary}\label{cor toric action}
For integral $\lambda$, there exists a toric action on an open dense subset of $\col$. Its moment map image is the interior of the string polytope $\Delta_{\underline{w_0}}(\lambda)\subset \R^{n^2}$.
\end{corollary}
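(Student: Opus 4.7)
The plan is to assemble the corollary from the machinery already prepared, using the Borel--Weil identification of $\col$ with $(G/B, \omega_\lambda)$ as the starting point. First I would fix the reduced decomposition $\underline{w_0}=(\alpha_{i_1},\ldots, \alpha_{i_N})$ of the longest Weyl group element and, from Example \ref{highest term for flags}, take the associated highest term valuation $v_{\underline{w_0}} \colon \C(G/B) \setminus \{0\} \to \Z^N$ built from the Parshin point coming from the Schubert flag. This valuation has one-dimensional leaves, which is the input format needed by the toric degeneration theorems.

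Next I would verify that the hypotheses of Theorem \ref{existence integrable system} are met for $X = G/B$ equipped with the very ample line bundle $\mathcal{L}_\lambda$ (whose existence requires $\lambda$ to be an integral dominant weight, available precisely in the case we treat here) and with the valuation $v_{\underline{w_0}}$. Smoothness of $X$ is clear, so the only nontrivial hypothesis is finite generation of the semigroup $S(X, \mathcal{L}_\lambda, v_{\underline{w_0}}, h)$, which is exactly the content of Corollary \ref{finitely generated}. The symplectic form on $G/B$ induced by the Kodaira embedding of $\mathcal{L}_\lambda$ is, by the discussion in Section \ref{sec:coadj}, precisely $\omega_\lambda$, which is symplectomorphic to the Kostant--Kirillov--Souriau form on $\col$.

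Applying Theorem \ref{existence integrable system} then yields an integrable system $\mu \colon G/B \to \R^d$ whose image is the Newton--Okounkov body $\Delta(S)$ and which generates a torus action on the inverse image of the interior of $\Delta(S)$. Here $d = \dim_\C G/B$ equals the number of positive roots of $\mathrm{Sp}(2n,\C)$, which for the root system of type $C_n$ is $n^2$; hence the Newton--Okounkov body sits in $\R^{n^2}$, matching the ambient space claimed in the corollary. By Theorem \ref{thm crystal okounkov}, this Newton--Okounkov body coincides with the string polytope $\Delta_{\underline{w_0}}(\lambda)$, so pulling the torus action back along the symplectomorphism $(G/B, \omega_\lambda) \cong \col$ produces the desired toric action on an open dense subset of $\col$ with moment map image equal to the interior of $\Delta_{\underline{w_0}}(\lambda)$.

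There is no genuinely hard step here; the proof is an assembly of pre-existing results, and the only point requiring any care is confirming that the dimension $d$ of the Newton--Okounkov body equals $n^2$ and that the torus action on the open set of $X_1 \cong G/B$ transports correctly under the symplectomorphism to $\col$. The real substance of the argument lies in Theorem \ref{thm crystal okounkov}, which is cited from \cite{K}, and in Corollary \ref{finitely generated}; after these are in hand the corollary is immediate.
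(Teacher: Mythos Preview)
Your proposal is correct and follows exactly the route the paper takes: the paper simply records that a choice of $\underline{w_0}$ yields the valuation $v_{\underline{w_0}}$ with finitely generated semigroup (Corollary~\ref{finitely generated}), and then states that Theorems~\ref{existence degeneration}, \ref{existence integrable system} and~\ref{thm crystal okounkov} together give the corollary. You have filled in more detail than the paper does (in particular the verification that $d=n^2$ and the explicit transport along the symplectomorphism $\col\cong(G/B,\omega_\lambda)$), but the logical structure is identical.
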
 

We prove the main theorem by exhibiting an embedding of (a $GL(n^2,\Z)$ image of) a simplex $\Delta^{n^2}( \min \{ \lambda_1,\ldots, \lambda_n\})$, of size equal to $\min \{ \lambda_1,\ldots, \lambda_n\}$, in the string polytope $\Delta_{\underline{w_0}}(\lambda)$.
The polytope $\Delta_{\underline{w_0}}(\lambda)$ for the longest word decomposition
$$w_0=s_1(s_2s_1s_2)\ldots (s_{n-1}\ldots s_1 \ldots s_{n-1})(s_ns_{n-1}\ldots s_1 \ldots s_{n-1}s_n),$$
(where $s_j=s_{\alpha_j}$, with the numbering of the simple roots from \eqref{list of roots}), was described by Littelmann (\cite[Section 6, Theorem 6.1 and Corollary 6]{L}; note the misprint in Corollary 6: $\lambda_{m-j+1}$ should be $\lambda_j$ as can be deduced from \cite[Proposition 1.5]{L}).

\begin{proposition}\cite{L}\label{string polytope description}
Fix a dominant weight 
$$\lambda= \lambda_1 \omega_1+\ldots + \lambda_n \omega_n=(\lambda_1+\ldots+\lambda_n) \e_1+\ldots+ \lambda_n \e_n.$$
Then the associated string polytope $\Delta_{\underline{w_0}}(\lambda)$ is the convex polytope in $\R^{n^2}$ given by $n^2$-tuples $\{a_{i,j}\,|\,1\leq i \leq n,\,i \leq j \leq 2n-i\}$ which satisfy:
$$a_{i,i} \geq a_{i,i+1} \geq \ldots \geq a_{i,2n-i} \geq 0, \ \ \forall \ i=1,\ldots n,$$
\begin{align*}
\bar{a}_{i,j} &\leq \lambda_{j}+s(\bar{a}_{i,j-1}) -2s (a_{i-1,j}) +s(a_{i-1,j+1}),\\
a_{i,j} &\leq \lambda_{j}+s(\bar{a}_{i,j-1}) -2s (\bar{a}_{i,j}) +s(a_{i,j+1}),\\
a_{i,n} &\leq \lambda_n +s(\bar{a}_{i,n-1}) -s (a_{i-1,n}),
\end{align*}
for all $1\leq i,j\leq n$, where we use the following notation
$$\bar{a}_{i,j}:=a_{i,2n-j} \textrm{ for }1 \leq j\leq n,$$
and
$$s(\bar{a}_{i,j}):=\bar{a}_{i,j} + \sum_{k=1}^{i-1}(a_{k,j}+\bar{a}_{k,j}),\ s(a_{i,j}):= \sum_{k=1}^{i}(a_{k,j}+\bar{a}_{k,j}),$$
 for $j<n$ (so $s(a_{i,n})=2 \sum_{k=1}^{i}a_{k,n}$).
\end{proposition}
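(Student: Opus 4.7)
The plan is to compute the string parametrization of the dual crystal basis $B_\lambda^*$ directly for the given reduced decomposition $\underline{w_0} = s_1(s_2s_1s_2)\cdots(s_n s_{n-1}\cdots s_1 \cdots s_{n-1} s_n)$ and identify its image as the displayed polytope. Recall that for $b^* \in B_\lambda^*$ one sets $a_1 := \max\{k : \tilde E_{i_1}^k b^* \neq 0\}$, applies $\tilde E_{i_1}^{a_1}$, repeats with $\tilde E_{i_2}$, and so on, recording the maxima as the string coordinates $(a_1, \ldots, a_N) \in \Z_{\geq 0}^N$. The chosen word has length $N = n^2$, grouped into $n$ blocks of sizes $1, 3, 5, \ldots, 2n{-}1$; I would index the coordinates of the $i$-th block as $a_{i,i}, a_{i,i+1}, \ldots, a_{i,2n-i}$ as in the statement.

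First, I would fix an explicit combinatorial model of $B_\lambda$ for $G = \mathrm{Sp}(2n, \C)$ in which the Kashiwara operators admit a local description; for type $C_n$ this can be King's symplectic tableaux, De Concini tableaux, or Littelmann's LS-path model. Using this model, I would establish the two groups of inequalities separately:
\begin{enumerate}
\item[(a)] \emph{String cone inequalities.} The conditions $a_{i,i} \geq a_{i,i+1} \geq \cdots \geq a_{i,2n-i} \geq 0$ describe the projected string cone $C_{\underline{w_0}}$ and depend only on the reduced word, not on $\lambda$. They arise from the fact that within each block the simple roots appear in a palindromic pattern centered on the unique short root $\alpha_1 = 2\e_n$, together with the braid and commutation relations among Kashiwara operators; equivalently, they encode the condition that after stripping off the operators of one block, the remaining vector still lies in $B_{\lambda'}^*$ for a suitable smaller $\lambda'$.
\item[(b)] \emph{Weight-dependent inequalities.} The three families involving $\lambda_j$ are specializations of the universal bound
\[ a_k \leq \langle \lambda, \alpha^{\vee}_{i_k} \rangle - \sum_{l = k+1}^{N} a_l \langle \alpha_{i_l}, \alpha^{\vee}_{i_k}\rangle \]
from Littelmann's description, applied to the pairings $\langle \alpha_{i_l}, \alpha^{\vee}_{i_k}\rangle$ in type $C_n$. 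The auxiliary sums $s(a_{i,j})$ and $s(\bar a_{i,j})$ are precisely the bookkeeping needed to collect the contributions of the coordinates $a_{k,l}$ and $\bar a_{k,l}$ that appear later in the word with nonzero Cartan pairing against $\alpha^{\vee}_{i_k}$.
\end{enumerate}

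I would then verify that these inequalities cut out \emph{exactly} $S_\lambda$ by induction on $n$: the first block of length $2n-1$ consumes the variables $\{a_{1,1}, \ldots, a_{1,2n-1}\}$ and strips off a ``rim'' of the crystal, leaving a sub-crystal isomorphic to $B_{\lambda'}^*$ for a dominant weight $\lambda'$ of $\mathrm{Sp}(2(n-1), \C)$; the remaining blocks are then a reduced decomposition of the longest word of the smaller Weyl group, and the inductive hypothesis applies. The main obstacle is the careful case analysis required to handle the short root $\alpha_1$, which is responsible for the third inequality $a_{i,n} \leq \lambda_n + s(\bar a_{i,n-1}) - s(a_{i-1,n})$ having a different shape (with a single $s$ instead of $2s$), and for the asymmetry in the definitions of $s(a_{i,j})$ versus $s(\bar a_{i,j})$; the coroot $\alpha_1^{\vee} = \e_n$ pairs with $\alpha_1$ to give $2$ rather than $1$, which is what forces the factor discrepancy, and one must check that Littelmann's formula for the bound on $a_{i,n}$ genuinely simplifies as stated after substituting $\langle 2\e_n, \e_n\rangle = 2$.
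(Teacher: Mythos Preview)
The paper does not prove this proposition at all: it is quoted directly from Littelmann's work (\cite[Section 6, Theorem 6.1 and Corollary 6]{L}), with only a correction of a misprint. So you are not reconstructing a proof given in the paper but rather re-deriving Littelmann's theorem.

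Your part (b) is completely correct and is in fact exactly what the paper illustrates in the $n=2$ example following the proposition: the three $\lambda$-dependent families are nothing but the specialization of the universal inequalities $a_k \leq \langle \lambda, \alpha_{i_k}^{\vee}\rangle - \sum_{l>k} a_l \langle \alpha_{i_l}, \alpha_{i_k}^{\vee}\rangle$ from \cite[Proposition~1.5]{L} to the type $C_n$ Cartan matrix, and the quantities $s(a_{i,j})$, $s(\bar a_{i,j})$ are precisely the bookkeeping for those pairings. This part is a computation, and you have it right.

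Part (a), the description of the string cone $C_{\underline{w_0}}$ by the monotonicity chains $a_{i,i}\geq a_{i,i+1}\geq\cdots\geq a_{i,2n-i}\geq 0$, is the genuinely nontrivial content (this is \cite[Theorem~6.1]{L}). Your inductive idea via a Levi branching is indeed Littelmann's strategy, but two points need correction. First, the indexing is inverted: in the chosen word $s_1(s_2s_1s_2)\cdots(s_n\cdots s_1\cdots s_n)$ the block of length $2n-1$ is the \emph{last} one, while the first block is the single letter $s_1$; in the paper's $n=2$ example the order of string coordinates is $a_{2,2},a_{1,1},a_{1,2},a_{1,3}$, so the variables $a_{1,j}$ come from the final block, not the first. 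The correct induction removes the trailing block $s_n\cdots s_1\cdots s_n$, leaving the longest word of the Levi $C_{n-1}$ generated by $s_1,\ldots,s_{n-1}$; one does not land in a crystal for $\mathrm{Sp}(2(n{-}1),\C)$ after ``stripping a rim,'' but rather restricts the $\mathrm{Sp}(2n,\C)$-crystal to the Levi and uses that the remaining string data parametrize its Demazure pieces. Littelmann's argument for this step relies essentially on standard monomial theory/LS-paths, and your sketch would need that machinery to close the gap. Second, a minor slip: $\alpha_1=2\e_n$ is the \emph{long} simple root in type $C_n$, not the short one (though your coroot computation $\alpha_1^{\vee}=\e_n$ is correct).
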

In the above formula we use the convention that $a_{i,j}=\bar{a}_{i,j}=0$ if $j<i$. Note that if $i>1$ then for $j<i$ the expression
$s(\bar{a}_{i,j})$ is not $0$ but equals $\sum_{k=1}^{i-1}(a_{k,j}+\bar{a}_{k,j})$.

Moreover, in \cite{L} Littelmann defines a map from $\R^{n^2}$ to $\R^{n^2}$ which maps $\Delta_{\underline{w_0}}(\lambda)$ to the polytope $GT(\lambda)$, obtained from a Gelfand-Tsetlin pattern\footnote{Remark on notation. Performing Thimm's trick for the sequence of subgroups $Sp(1) \subset \ldots \subset Sp(n-1) \subset \text{Sp}(n)$ produces a Hamiltonian action of a torus of dimension $\frac 1 2 n(n-1)$ on $\col$. The image of the momentum map for this torus (not toric) action is a polytope of dimension $\frac 1 2 n(n-1)$ which is sometimes called a Gelfand-Tsetlin polytope. This polytope can be obtained from $GT(\lambda)$ described here via a projection forgetting the $\{z_{i,j}\}$ coordinates.}, which induces a bijection between the integral points  of $\Delta_{\underline{w_0}}(\lambda)$ and $GT(\lambda)$. 
We first recall from \cite{L} the definition of the polytope $GT(\lambda)$. 
For simplicity of notation let 
$$l_j:= \lambda_j+\ldots+\lambda_n$$
so that $\lambda= l_1 \e_1 +\ldots+l_n \e_n$.
Let $\{y_{i,j}\}$, $2\leq i \leq n$, $i \leq j \leq n$ and $\{ z_{i,j}\}$, $1\leq i \leq n$, $i \leq j \leq n$, denote coordinates in $\R^{n^2}$. A point 
$$(y,z):=(z_{1,1},\ldots, z_{1,n}, y_{2,2},\ldots, y_{2,n},z_{2,2},\ldots, z_{2,n}, \ldots, y_{n,n}, z_{n,n})$$
in $\R_{\geq 0}^{n^2}$ is called a {\bf Gelfand-Tsetlin pattern} for $\lambda= l_1 \e_1 +\ldots+l_n \e_n$ if the entries satisfy the ``betweenness " condition:
\begin{equation}\label{eqn gtpolytope}
l_k \geq z_{1,k} \geq l_{k+1},\ \ \ 
z_{i-1,j-1}\geq y_{i,j}\geq z_{i-1,j} ,\ \ \ 
y_{i,j} \geq z_{i,j} \geq y_{i,j+1}
\end{equation}
for $1 \leq k \leq n$, $1\leq i \leq n$, $i \leq j \leq n$, where $y_{1,j}=l_j$ for simplicity of notation.
A convenient way to visualize these conditions is to organize the coordinates of $\R^{n^2}$ as in Figure \ref{gtcoordinates} (for $n=3$). The value of each coordinate must be between the values of its top right and top left neighbors.
\begin{center}
\begin{figure}[h]
\includegraphics[width=.2\textwidth]{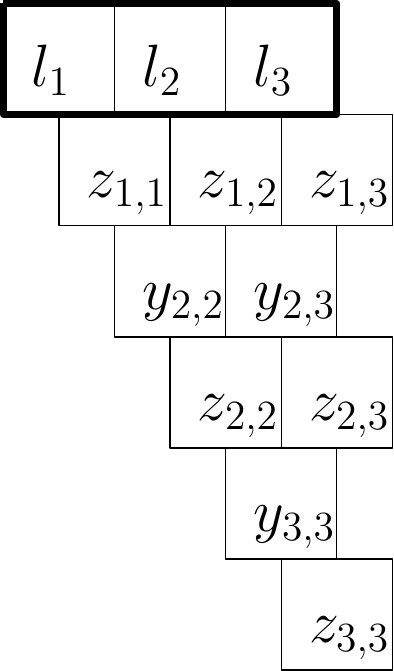}
\caption{ A graphical presentation of a Gelfand-Tsetlin pattern (for $n=3$).}
\label{gtcoordinates}
\end{figure}
\end{center}
Littelmann's map from the string polytope $\Delta_{\underline{w_0}}(\lambda)$ to the Gelfand-Tsetlin polytope $GT(\lambda)$  associates to each element $\underline{a} \in \R^{n^2}$ the pattern $P(\underline{a})=(y_{i,j},z_{i,j})$ of highest weight $\lambda=y_{1,1}\epsilon_1+ \hdots + y_{1,n}\epsilon_n$ defined by equations (\cite{L}; note the misprint in \cite{L}: $\alpha_{m-k+1}$ should be  $\alpha_{m-j+1}$):
\begin{align}
\label{eqn stringtoGT}
y_{i,1}\epsilon_1+ \hdots +y_{i,n}\epsilon_n &= \lambda-\sum^{i-1}_{k=1}{\left(a_{k,n}\alpha_1+\sum^{n-1}_{j=k}(a_{k,j}+\overline{a}_{k,j})\alpha_{n-j+1}\right)} 
\nonumber \\
\\
z_{i,1}\epsilon_1+\hdots + z_{i,n}\epsilon_n &= \sum^{n}_{k=1}y_{i,k}\epsilon_k - \frac{a_{i,n}}{2}\alpha_1 - \sum^{n-1}_{j=i}\overline{a}_{i,j}\alpha_{n-j+1}, \nonumber
\end{align}
where $\alpha_j$ are the simple roots as in \eqref{list of roots}:
$$\alpha_n=\e_1 - \e_2, \; \alpha_{n-1}=\e_2 - \e_3, \; \hdots, \; \alpha_2=\e_{n-1} - \e_n,\; \alpha_1=2\e_n.$$
In fact this map is a $GL(n^2, \Z)$ transformation followed by a translation, as we now show.

\begin{proposition}\label{c is gt}
The map \eqref{eqn stringtoGT} which maps the polytope $\Delta_{\underline{w_0}}(\lambda)$  to the Gelfand-Tsetlin polytope $GT(\lambda)$ is a $GL(n^2, \Z)$-transformation followed by a translation.
\end{proposition}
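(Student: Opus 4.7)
The plan is to show that the map $\underline{a}\mapsto(y,z)$ defined by \eqref{eqn stringtoGT} is affine with integer linear part and integer-affine inverse, hence a translation composed with an element of $GL(n^2,\Z)$. Affineness is immediate: setting all $a_{k,j},\bar a_{k,j}$ to zero gives $y_{i,j}=z_{i,j}=l_j$, which is the translation part. The first delicate point is integrality of the linear part: the only fractional coefficient appearing in \eqref{eqn stringtoGT} is the $\tfrac12$ in $\tfrac{a_{i,n}}{2}\alpha_1$, but it is paired with $\alpha_1=2\e_n$, so the product $\tfrac{a_{i,n}}{2}\alpha_1=a_{i,n}\e_n$ has integer coefficient; all remaining simple roots $\alpha_{n-j+1}=\e_j-\e_{j+1}$ are integer in the $\e$-basis, so every matrix entry of the linear part is an integer.

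To prove unimodularity I would exhibit the inverse explicitly with integer coefficients, using the block-triangular structure in the index $i$: the first equation of \eqref{eqn stringtoGT} shows that $y_{i,\cdot}$ depends only on blocks $(a_{k,\cdot},\bar a_{k,\cdot})$ with $k<i$, while the second shows that $z_{i,\cdot}$ depends on $y_{i,\cdot}$ and the $i$-th block alone. Proceeding by induction on $i$ (starting from $y_{1,j}=l_j$), at each step $y_{i,\cdot}$ is already known as an integer combination of target coordinates. Taking $\e_j$-components of the second equation and telescoping gives
$$\bar a_{i,k}=\sum_{l=i}^{k}(y_{i,l}-z_{i,l})\ \ (i\leq k\leq n-1),\qquad a_{i,n}=\sum_{l=i}^{n}(y_{i,l}-z_{i,l}),$$
each an integer combination of the target coordinates. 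Differencing consecutive rows of the first equation and telescoping then recovers $a_{i,j}+\bar a_{i,j}$ as an integer combination of $y_{i,l}-y_{i+1,l}$ for $i\leq j<n$, and subtracting the previously computed $\bar a_{i,j}$ yields $a_{i,j}$ with integer coefficients. This completes the $i$-th block and allows us to pass to $i+1$; the boundary case $i=n$ is degenerate with the single variable $a_{n,n}=y_{n,n}-z_{n,n}$.

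The main obstacle is a potential factor of $\tfrac12$ hidden in the inversion: using only the first equation, the $2\e_n$ term from $\alpha_1$ forces $a_{i,n}=\tfrac12\sum_{l=i}^{n}(y_{i,l}-y_{i+1,l})$, which is not integer in general. The resolution is that the second equation, precisely where the $\tfrac12$ was originally introduced, yields $a_{i,n}$ directly as the integer combination displayed above; thus the two equations must be used in tandem and neither suffices alone. Once these explicit inversion formulas are established, the inverse is an integer-affine map, so the linear part of \eqref{eqn stringtoGT} lies in $GL(n^2,\Z)$ and the proposition follows.
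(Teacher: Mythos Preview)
Your approach --- constructing an integer-affine inverse directly --- is sound and genuinely different from the paper's argument, but there is one imprecision worth flagging. When you telescope the differences $y_{i,l}-y_{i+1,l}$ from $l=i$ upward to express $a_{i,j}+\bar a_{i,j}$, the term $y_{i+1,i}$ enters; however, only $y_{i,j}$ with $i\le j$ are coordinates of $GT(\lambda)$, so $y_{i+1,i}$ is not a target coordinate and is not known at step $i$ of your induction (it depends on the very block $a_{i,\cdot}$ you are trying to recover). The fix is immediate: telescope from the $\e_n$-end instead and use the already-recovered $a_{i,n}$, obtaining
\[
a_{i,j}+\bar a_{i,j}=2a_{i,n}-\sum_{l=j+1}^{n}(y_{i,l}-y_{i+1,l}),
\]
which involves only $y_{i+1,l}$ with $l\ge i+1$, all genuine target coordinates. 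With this adjustment your explicit inverse goes through and the proposition follows.

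The paper's proof takes a much shorter, indirect route. After the same observation that the linear part $\Phi$ has integer entries, it invokes Littelmann's result that \eqref{eqn stringtoGT} is a bijection between the lattice points of $k\Delta_{\underline{w_0}}(\lambda)$ and of $k\,GT(\lambda)$ for every $k\in\N$; the limit $\mathrm{vol}(\Delta)=\lim_{k\to\infty}\#(k\Delta\cap\Z^{n^2})/k^{n^2}$ then forces equal volumes, hence $|\det\Phi|=1$. That argument is slicker but non-constructive and leans on the cited bijection; your proof is self-contained and yields explicit inversion formulas. The paper in fact remarks that its own original proof was a direct computation (the $n=2$ example following the proposition is a remnant of it), replaced at a referee's suggestion by the volume argument.
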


We are grateful to the referee for suggesting to replace our original proof (by direct computation) with the following one.
\begin{proof}
Clearly equation \eqref{eqn stringtoGT} defines a composition of a linear map $\Phi \in GL(n^2,\R)$, defined by a matrix with integral entries (remember that $\alpha_1=2\epsilon_n$) and a translation. It suffices to show that $|\det \Phi|=1$ as this will imply that $\Phi^{-1}$ is also a matrix with integral entries, proving that $\Phi \in GL(n^2,\Z)$.
The fact that \eqref{eqn stringtoGT} is a bijection between integral points of $\Delta_{\underline{w_0}}(k\lambda)=k\Delta_{\underline{w_0}}(\lambda)$ and integral points of $GT(k\lambda)=kGT(\lambda)$ for any $k \in \N$, together with the fact for any integral polytope $\Delta \in \R^{n^2}$, its volume is the limit
$$vol\, (\Delta) =\lim_{k\rightarrow \infty } \frac{\# (k\Delta \cap \Z^{n^2})}{k^{n^2}},$$
implies that $vol\,(\Delta_{\underline{w_0}}(\lambda))=vol\,GT(\lambda)$. Therefore, we must have that $|\det \Phi|=1$.
\end{proof}

\begin{example} Let's take a closer look at the case $n=2$ and re-prove the above Proposition by direct computation.  
In this case, the simple roots are: $\alpha_1=2\epsilon_2$, $\alpha_2=\epsilon_1-\epsilon_2$. 
We fix a reduced word decomposition $w_0=s_1 \,s_2\,s_1\,s_2$,
and fix a weight: 
$$\lambda=\lambda_1 w_1 + \lambda_2 w_2=(\lambda_1 +\lambda_2) \epsilon_1 + \lambda_2 \epsilon _2.$$
The associated string polytope $\Delta=\Delta_{\underline{w_0}}(\lambda)$ is a subset of $\R^4$, for which we use coordinates $a_{22}, a_{11}, a_{12}, a_{13}$,
and is defined by the following inequalities:
$$a_{22}\geq 0,\ a_{11}\geq a_{12}\geq a_{13}\geq 0,$$
\begin{align*}
a_{13}=\bar{a}_{11} &\leq \lambda_{1},\\
a_{11} &\leq \lambda_{1} -2s (\bar{a}_{11}) +s(a_{12})=\lambda_{1} -2a_{13}+ 2a_{12},\\
a_{12} &\leq \lambda_2 +s(\bar{a}_{11})=\lambda_2+a_{13},\\
a_{22}& \leq \lambda_2+s(\bar{a}_{21})-s(a_{12})=\lambda_2+a_{11} +a_{13}-2a_{12}.
\end{align*}
We derive the second set of inequalities for the symplectic group (see also Corollary 6 of \cite{L}) from the description of the string polytope for a general $G$ given in \cite{L}: the Definition on page $5$ and Proposition $1.5$. According to this description (using our fixed reduced word decomposition and numbering of simple roots):
\begin{align*}
a_{13} &\leq \langle \lambda, \alpha_2^{\vee} \rangle =\langle \lambda, (\epsilon_1-\epsilon_2)^{\vee} \rangle=
 (\lambda_{1}+\lambda_2) - \lambda_2=\lambda_1,\\
 a_{12} &\leq \langle \lambda - a_{13}\alpha_2, \alpha_1^{\vee} \rangle =  
 \langle \lambda, 2\epsilon_2^{\vee} \rangle-a_{13} \langle\epsilon_1-\epsilon_2, 2\epsilon_2^{\vee} \rangle
 =\lambda_2+a_{13},\\
a_{11} &\leq \langle \lambda - a_{13}\alpha_2-a_{12} \alpha_1, \alpha_2^{\vee} \rangle  \\ & 
= \langle \lambda,  (\epsilon_1-\epsilon_2)^{\vee} \rangle-a_{13} \langle\epsilon_1-\epsilon_2,  (\epsilon_1-\epsilon_2)^{\vee} \rangle
 -a_{12} \langle  2\epsilon_2, (\epsilon_1-\epsilon_2)^{\vee} \rangle
 \\ &=\lambda_1-2a_{13} -a_{12} (-2),\\
a_{22}&\leq \langle \lambda - a_{13}\alpha_2-a_{12} \alpha_1-a_{11} \alpha_2, \alpha_1^{\vee} \rangle  \\ & 
= \lambda_2+a_{13} - a_{12} \langle 2\epsilon_2,2\epsilon_2^{\vee} \rangle-a_{11} \langle \epsilon_1-\epsilon_2,2\epsilon_2^{\vee} \rangle
\\&=\lambda_2+a_{13} -2 a_{12}+a_{11}.
\end{align*}
We now analyze the map from the above string polytope to the Gelfand-Tsetlin polytope, given by equations \eqref{eqn stringtoGT}.
As
$$z_{11}\epsilon_1+z_{12} \epsilon_2=(\lambda_1+\lambda_2) \epsilon_1 +\lambda_2 \epsilon_2 - \frac{a_{12}}{2} (2 \epsilon_2)-a_{13}(\epsilon_1-\epsilon_2),$$
we get that:
\begin{align*}
z_{11}&=\lambda_1 + \lambda_2 - a_{13},\\
z_{12}&= \lambda_2 - a_{12}+ a_{13}.
\end{align*}
The value of $y_{22}$ is the coefficient of $\epsilon_2$ in
$\lambda- a_{12}(2 \epsilon_2) -(a_{11}+a_{13})(\epsilon_1-\epsilon_2)$, 
and $z_{22}$ is the coefficient of $\epsilon_2$ in 
$ y_{21}\epsilon_1+ y_{22}\epsilon_2 - \frac{a_{22}}{2}(2 \epsilon_2)$
thus
\begin{align*}
y_{22}&=\lambda_2+a_{11}-2a_{12}+a_{13},\\
z_{22}&=y_{22}-a_{22},
\end{align*}
i.e.
\begin{displaymath}
\left[\begin{array}{c}
z_{11}\\
z_{12}\\
y_{22}\\
z_{22}
\end{array}
\right]
=
\left[
\begin{array}{rrrr}
0 &0& 0&-1\\
0& 0&-1&1\\
0&1&-2&1\\
-1&1&-2&1
\end{array} \right]
\cdot
\left[\begin{array}{c}
a_{22}\\
a_{11}\\
a_{12}\\
a_{13}
\end{array}
\right]
+
\left[\begin{array}{c}
\lambda_1+\lambda_2\\
\lambda_2\\
\lambda_2\\
\lambda_2
\end{array}
\right]
\end{displaymath}

Therefore, the inequalities describing the string polytope translate to the following inequalities:
$$a_{22}\geq 0 \Leftrightarrow y_{22}\geq z_{22},$$
$$a_{11}\geq a_{12} \Leftrightarrow y_{22}+2a_{12}-a_{13}-\lambda_2 \geq a_{12}
 \Leftrightarrow y_{22}\geq -a_{12}+a_{13}+\lambda_2=z_{12},$$
$$a_{12}\geq a_{13} \Leftrightarrow 0\leq \lambda_2 - z_{12},$$
$$a_{13}\geq 0 \Leftrightarrow \lambda_1+\lambda_2\geq z_{11},$$
$$a_{13} \leq \lambda_1 \Leftrightarrow z_{11} \geq \lambda_2,$$
$$a_{12}-a_{13} \leq \lambda_2 \Leftrightarrow \lambda_2  -z_{12} \leq \lambda_2\Leftrightarrow 0\leq z_{12},$$
$$a_{11}-2a_{12}+2a_{13} \leq \lambda_1 \Leftrightarrow y_{22}-z_{11}+\lambda_1\leq \lambda_1 \Leftrightarrow y_{22}\leq z_{11},$$
$$a_{22}-a_{11}+2a_{12}-a_{13} \leq \lambda_2 \Leftrightarrow \lambda_2-z_{22}\leq \lambda_2 \Leftrightarrow 0\leq z_{22}.$$
The inequalities on the right are exactly the inequalities describing the Gelfand-Tsetlin polytope.
\end{example}

\begin{theorem}\label{gt fits simplex}
Let $r=\min \{ \lambda_1,\ldots, \lambda_n\}$ and $\Delta(r)$ be an $n^2$-dimensional simplex of size (the lattice length of the edges) $r$.
There exist $\Psi \in GL(n^2, \Z)$ and $x \in \R^{n^2}$ such that 
$$\Psi(\Delta(r)) +x \subset GT(\lambda).$$
\end{theorem}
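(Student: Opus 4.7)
The plan is to exhibit an explicit vertex $v_0$ of $GT(\lambda)$ together with $n^2$ integer edge vectors $\{v_\alpha\}_\alpha$ at $v_0$, indexed by the coordinates of the GT pattern, and to verify that the resulting simplex $\{v_0 + \sum_\alpha t_\alpha v_\alpha : t_\alpha \geq 0,\, \sum_\alpha t_\alpha \leq r\}$ lies in $GT(\lambda)$. Setting $\Psi$ to be the matrix with columns $v_\alpha$ and $x = v_0$ then gives the claim.

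I take $v_0$ to be the point with $z_{i,j} = y_{i,j} = l_j$ for every coordinate position $(i,j)$, where $l_j := \lambda_j + \cdots + \lambda_n$. All betweenness relations in \eqref{eqn gtpolytope} hold, and exactly $n^2$ of them are tight (the upper bound of each $z$-coordinate and the lower bound of each $y$-coordinate), so $v_0$ is a vertex of $GT(\lambda)$. For each coordinate, define the edge direction that relaxes its associated tight inequality while preserving the remaining ones: a direct computation yields $v_{z_{i,j}}$ with entry $-1$ at $z_{i,j}$ and at every coordinate lying below $z_{i,j}$ in the $j$th column of the GT pattern (the positions $y_{i+1,j}, z_{i+1,j}, \ldots, y_{j,j}, z_{j,j}$); and $v_{y_{i,j}}$ with entry $+1$ at $y_{i,j}$ and at every coordinate below $y_{i,j}$ in column $j$. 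Ordering coordinates lexicographically (column $j$ first, then row $i$), the matrix $\Psi$ formed by these vectors is upper triangular with $\pm 1$ on the diagonal, hence $\Psi \in GL(n^2,\Z)$. A short computation along each edge shows that $v_{z_{i,j}}$ reaches the opposite wall of $GT(\lambda)$ at distance $\lambda_j$, and $v_{y_{i,j}}$ at distance $\lambda_{j-1}$; both are at least $r$.

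For $t = (t_\alpha) \in \Delta(r)$, the coordinates of $v_0 + \Psi(t)$ are
\begin{align*}
y_{i,j} &= l_j - \sum_{i' < i} t_{z_{i',j}} + \sum_{2 \leq i' \leq i} t_{y_{i',j}}, \\
z_{i,j} &= l_j - \sum_{i' \leq i} t_{z_{i',j}} + \sum_{2 \leq i' \leq i} t_{y_{i',j}}.
\end{align*}
Substituting into each of the three families of betweenness inequalities, every clause telescopes to either ``$t_\alpha \geq 0$'' or to ``$\lambda_k + (\text{nonnegative terms}) \geq (\text{a partial sum of the } t_\alpha)$''. Since $\sum_\alpha t_\alpha \leq r \leq \lambda_k$ for every $k$, the latter type is automatic, concluding the verification. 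The main obstacle is the choice of edge vectors: the ``propagate down the column'' rule is precisely what makes each betweenness inequality in the third step decouple column-by-column and reduce to the uniform bound $r \leq \min_k \lambda_k$; a more naive choice introduces cross-column terms that cannot be controlled by this single uniform estimate.
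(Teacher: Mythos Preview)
Your argument is correct and follows the same overall strategy as the paper---pick a simple vertex of $GT(\lambda)$, identify the $n^2$ primitive edge directions there, check they form a $\Z$-basis, and conclude that the simplex of size $r$ spanned by them lies in $GT(\lambda)$---but you make a genuinely different choice of vertex. The paper works at the ``all upper bounds'' vertex $V_0$, where $y_{i,j}=z_{i,j}=l_{j-i+1}$, and its edges $E_{i_0,j_0},F_{i_0,j_0}$ propagate along the \emph{diagonals} $j-i=\mathrm{const}$ of the pattern, with lengths $\lambda_{j_0-i_0+1}$. You instead work at the vertex $v_0$ where $y_{i,j}=z_{i,j}=l_j$ (upper bound tight for each $z$, lower bound tight for each $y$), and your edges propagate down \emph{columns} $j=\mathrm{const}$, with lengths $\lambda_j$ and $\lambda_{j-1}$. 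Both vertices are simple and both choices work.

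Two small remarks. First, your ordering ``column $j$ first, then row $i$'' does not by itself separate $y_{i,j}$ from $z_{i,j}$; you clearly intend the natural top-to-bottom order $z_{1,j},y_{2,j},z_{2,j},\ldots,z_{j,j}$ within each column, which indeed makes $\Psi$ block-diagonal with triangular blocks and $\pm1$ on the diagonal. Second, your final verification step is a genuine addition: knowing that each edge has lattice length $\geq r$ is not by itself enough to conclude that the size-$r$ simplex fits (a far facet could cut it), and your explicit substitution showing that every non-tight betweenness inequality reduces to $\lambda_k\geq(\text{a sub-sum of the }t_\alpha)\leq r$ is exactly what closes this. The paper's proof leaves this point implicit; your write-up is more complete on this score.
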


\begin{proof}
Recall from \eqref{eqn gtpolytope} the definition of $GT(\lambda)$. Let $V_0:=V_0(\lambda)$ be a vertex of $GT(\lambda)$ where all the coordinates $y_{i,j}$, $z_{i,j}$ are equal to their upper bounds, i.e.
$$z_{i,j}=y_{i,j}=z_{i-1,j-1}=y_{i-1,j-1}=\ldots=z_{1,j-i+1}=l_{j-i+1}.$$
We will analyze the edges starting from $V_0$. To obtain an edge starting from $V_0$, we pick one of the inequalities  \eqref{eqn gtpolytope} defining $GT(\lambda)$ which is an equality at $V_0$, and consider the set of points in $GT(\lambda)$ satisfying all the same equations that $V_0$ satisfies, except possibly this chosen one. More precisely, each of the $\frac 1 2 n(n-1)$ pairs $(i_0,j_0)$ with $2\leq i_0 \leq j_0 \leq n$ gives us an edge $E_{i_0,j_0}$ defined as the set of points $(y,z) \in \R^{n^2}$ satisfying:
 \begin{align*}
& y_{i,j}=z_{i,j}=l_{j-i+1} \textrm{ unless } j-i=j_0-i_0  \textrm{ and } i\geq i_0,\\
  &y_{i_0,j_0}=z_{i_0,j_0}=y_{i_0+1,j_0+1}=\ldots=z_{n-j_0+i_0,n} \in [l_{j_0-i_0+2},l_{j_0-i_0+1}].
  \end{align*}
  The lattice length of this edge is $l_{j_0-i_0+1}-l_{j_0-i_0+2}=\lambda_{j_0-i_0+1}$. An example of such an edge is presented in Figure \ref{edges}, on the left.
  
  Moreover, each of the $\frac 1 2 n(n+1)$ pairs $(i_0,j_0)$ with $1\leq i_0 \leq j_0 \leq n$ gives us an edge $F_{i_0,j_0}$ defined as the set of points $(y,z) \in \R^{n^2}$ satisfying
 \begin{align*}
& y_{i,j}=z_{i,j}=l_{j-i+1} \textrm{ unless } j-i=j_0-i_0  \textrm{ and } i\geq i_0,\\
&y_{i_0,j_0}=l_{j_0-i_0+1},\\
 &z_{i_0,j_0}=y_{i_0+1,j_0+1}=z_{i_0+1,j_0+1}=\ldots=z_{n-j_0+i_0,n} \in [l_{j_0-i_0+2},l_{j_0-i_0+1}].
  \end{align*}
  
  The lattice length of this edge is also $l_{j_0-i_0+1}-l_{j_0-i_0+2}=\lambda_{j_0-i_0+1}$. An example of such an edge is presented in Figure \ref{edges}, on the right.
  \begin{figure}[h]
 \includegraphics[width=.7\textwidth]{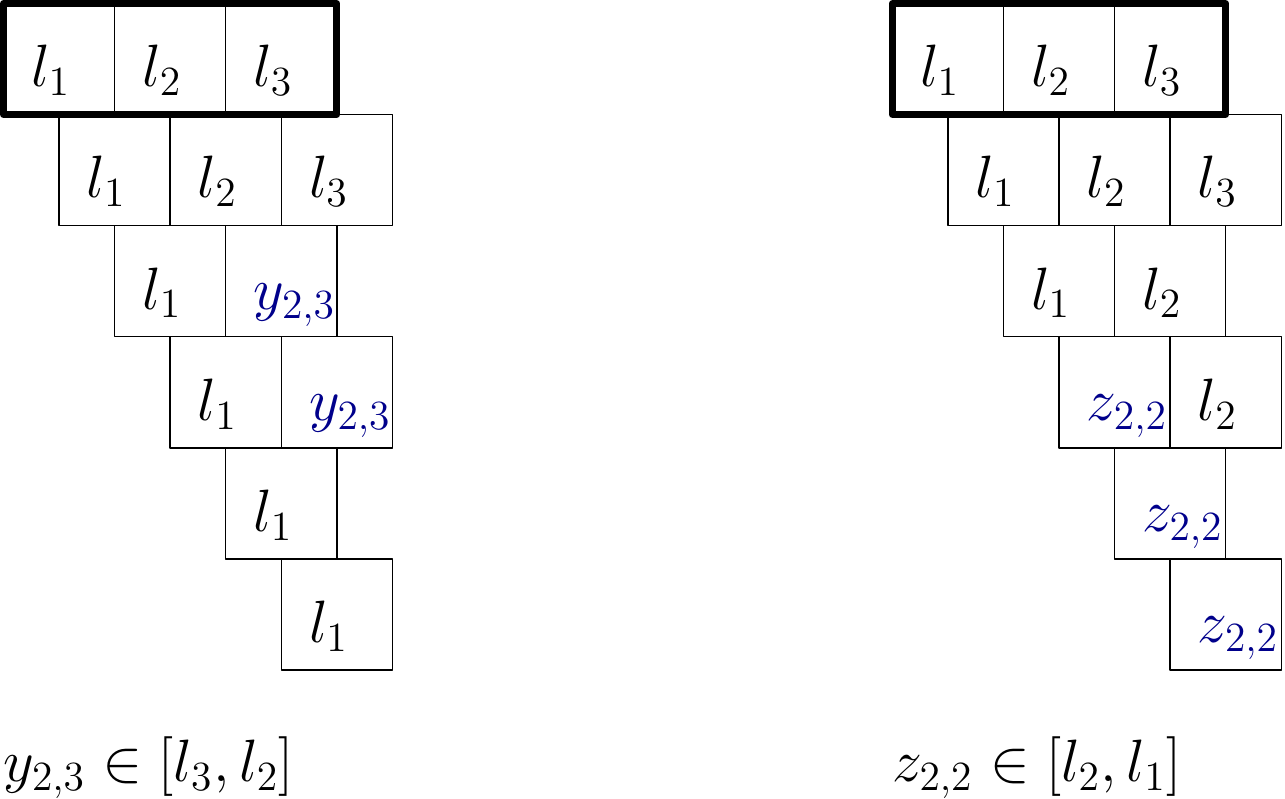}
\caption{The edges $E_{2,3}$ and $F_{2,2}$.}
\label{edges}
\end{figure}
  
  The above collection gives $\frac 1 2 n(n-1) + \frac 1 2 n(n+1)=n^2$ edges.
  Observe that the directions of these $n^2$ edges from $V_0$ form a $\Z$-basis of $\Z^{n^2} \subset \R^{n^2}$. 
  Indeed, if we keep the ordering: 
  $$z_{1,1},\, z_{1,2}, \ldots,z_{1,n},\, y_{2,2},\, y_{2,3}, \ldots,y_{2,n},\,z_{2,2}, \ldots,z_{2,n},...$$
  of our usual coordinates on $\R^{n^2}$ and
   order the edge generators by: 
   $$F_{1,1}, F_{1,2}, \ldots,F_{1,n}, E_{2,2}, E_{2,3}, \ldots,E_{2,n},F_{2,2}, \ldots,F_{2,n},...$$
   then the matrix of edge generators expressed in our usual basis is an upper triangular matrix with $(-1)$'s on the diagonal.
  Therefore, there
exist $\Psi \in GL(n^2, \Z)$ and $x \in \R^{n^2}$ such that: 
$$\Psi(\Delta(\min\{\lambda_j\ |\ j=1,\ldots,n\})) +x \subset GT(\lambda).$$
\end{proof}

Combining the above claims, we prove our main result.
\begin{proof}[Proof of Theorem \ref{gromovmain}.]
Let $$\lambda= \lambda_1 \omega_1+\ldots + \lambda_n \omega_n=(\lambda_1+\ldots+\lambda_n) \e_1+\ldots+ \lambda_n \e_n$$
 be a point in the interior of the chosen Weyl chamber $\Lambda^+_{\R}$ for the symplectic group $\text{Sp}(n)$, which lies on some rational line.
We want to show that the Gromov width of the coadjoint orbit $\col$ through $\lambda$ is at least  
$\min \{ \lambda_1,\ldots, \lambda_n\}.$ 

Recall that $\Lambda^+$ denotes the integral points of the positive Weyl chamber and let $\Lambda^+_{\Q}$ denote the rational ones.
If $\lambda $ is integral then, by Corollary \ref{cor toric action}, an open dense subset of $\col$ is equipped with a toric action. 
The momentum map image is the interior of a polytope equivalent under the action of $GL(n^2,\Z)$ and a translation to the 
Gelfand-Tsetlin polytope $GT(\lambda)$ (see Theorem \ref{string polytope description} and Proposition \ref{c is gt}).
Then Theorem \ref{gt fits simplex} and Proposition \ref{embedding} together with Theorem \ref{caviedes} prove that the Gromov width of $\col$ is exactly $\min \{ \lambda_1,\ldots, \lambda_n\}.$

If $\lambda$ is not integral, let $a \in \R_+$ be such that $a \lambda $ is integral. 
Observe that the coadjoint orbits $\mathcal{O}_{a \lambda}$ and $\col$ are diffeomorphic and differ only by a rescaling of their symplectic forms. 
Thus the Gromov width of $\mathcal{O}_{a \lambda}$, which is $\min \{ a\lambda_1,\ldots, a\lambda_n\}$, is $a$ times bigger than the Gromov width of $\col$. This proves that the Gromov with of $\col$ for $\lambda$ rational is exactly $\min \{ \lambda_1,\ldots, \lambda_n\}$.
\end{proof}

\subsection{Further comments} Note that the Gromov width of $\mathcal{O}_{\lambda}$ is lower semi-continuous as a function of $\lambda$, which one can prove by adjusting a ``Moser type" argument from  \cite{MP}. However, to extend our result to orbits $\mathcal{O}_{\lambda}$ with arbitrary $\lambda$, what is in fact needed is upper semi-continuity. We are very grateful to the referee for this remark. 
It is not known in general if the Gromov width of $\mathcal{O}_{\lambda}$ is upper semi-continuous. It would be if, for example, all obstructions to embeddings of balls come from $J$-holomorphic curves. (The last condition is often called the ``Biran Conjecture".) Note that an implication of the above conjecture of Biran is that the Gromov width of integral symplectic manifolds must be greater than or equal to one. This statement was proved, under certain assumptions, by Kaveh in \cite{K}.


\begin{thebibliography}{99}
\setlength{\labelwidth}{1.5cm} 
\bibitem{AB} V.~Alexeev, M.~Brion, \emph{Toric degenerations of spherical varieties}, Selecta Math. (N.S.) {\bf 10} (2004), 453--478.

\bibitem{And} D.~Anderson, \emph{Okounkov bodies and toric degenerations}, Math. Ann. {\bf 356} (2013), no. 3, 1183--1202.

\bibitem{BK} A.~Berenstein, D.~Kazhdan, \emph{Geometric and unipotent crystals, II. From unipotent bicrystals to crystal bases}, Contemp. Math., {\bf 433} (2007), 13--88.

\bibitem{Caviedes} A.~Caviedes~Castro, \emph{Upper bound for the Gromov width of coadjoint orbits of compact Lie groups}, 	
J. Lie Theory {\bf 26} (2016), no. 3, 821--860.


\bibitem{HK} M.~Harada, K.~Kaveh, \emph{Integrable systems, toric degenerations and Okounkov bodies}, Invent. Math. (2015), doi: 10.1007/s00222-014-0574-4, 1--59.

\bibitem{HeKa} A.~Henriques, J.~Kamnitzer, \emph{Crystals and coboundary categories}, Duke Math. J. {\bf 132} (2006), no. 2, 191--216.

\bibitem{HoKa} J.~Hong, S.-J.~Kang, \emph{Introduction to Quantum Groups and Crystal Bases}, Graduate Studies in Mathematics {\bf 42}, Amer. Math. Soc., Providence, RI, (2002).

\bibitem{KarshonTolman} Y.~Karshon, S.~Tolman, \emph{The Gromov width of complex Grassmannians}, Algebr. Geom. Topol. {\bf 5} (2005), 911--922.

\bibitem{K} K.~Kaveh, \emph{Crystal bases and Newton-Okounkov bodies}, Duke Math. J. {\bf 164} (2015), no. 13, 2461--2506.

\bibitem{LMS} J.~Latschev, D.~McDuff, F.~Schlenk, \emph{The Gromov width of $4$-dimensional tori}, Geom. Topol. {\bf 17} (2013), 2813--2853.

\bibitem{L} P.~Littelmann, \emph{Cones, crystals and patterns}, Transform. Groups {\bf 3} (1998), 145--179.

\bibitem{Lu} G.~Lu, \emph{Symplectic capacities of toric manifolds and
related results}, Nagoya Math. J. {\bf 181} (2006), 149--184.

\bibitem{Lu2} G.~Lu, \emph{Gromov-Witten invariants and pseudo symplectic capacities}, Israel J. Math. {\bf 156} (2006), 1--63.

\bibitem{McDuff} D.~McDuff, \emph{Lectures on Symplectic Topology},
IAS/Park City Math Series {\bf 7}, ed. Eliashberg and Traynor, Amer. Math. Soc. (1998).

\bibitem{MP} A. Mandini and M. Pabiniak, \emph{Gromov width of polygon spaces},  Transformation Groups (2017), 1--35.

\bibitem{Pabiniak} M.~Pabiniak, \emph{Gromov width of non-regular
coadjoint orbits of U(n), SO(2n) and SO(2n+1)},  Math. Res. Lett. {\bf 21} (2014), no. 1, 187 -- 205.

\bibitem{Schlenk} F.~Schlenk, \emph{Embedding problems in symplectic geometry}, 
De Gruyter Exp. Math. {\bf 40}, Berlin, (2005), 
x+250 pp., ISBN 3-11-017876-1.

\bibitem{Traynor} L.~Traynor, \emph{Symplectic Packing Constructions}, J. Differential Geom. {\bf 42} (1995), 411 -- 429.

\bibitem{Zoghi} M.~Zoghi, \emph{The Gromov width of Coadjoint Orbits of Compact Lie Groups}, PhD Thesis, University of Toronto, 2010.
\end{thebibliography}
\end{document}